\documentclass[10pt]{article}

\usepackage{graphicx}
\usepackage{amssymb}
\usepackage{epstopdf}
\usepackage{tikz}

\usepackage{amsfonts}

\usepackage{amsmath}
\usepackage[colorlinks=true,
            linkcolor=blue,
        citecolor=blue,
            urlcolor=blue]{hyperref}
\usepackage[nameinlink,noabbrev]{cleveref}
\usepackage{amssymb}
\usepackage{comment}
\usepackage{color}
\usepackage{amsthm}

\def\phi{{\varphi}}

\DeclareSymbolFont{AMSb}{U}{msb}{m}{n}

\usepackage{indentfirst}
\begin{document}

\addtolength{\textheight}{0 cm}
\addtolength{\hoffset}{0 cm}
\addtolength{\textwidth}{0 cm}
\addtolength{\voffset}{0 cm}

\setcounter{secnumdepth}{5}

\newtheorem{theorem}{Theorem}[section]
\newtheorem{lemma}[theorem]{Lemma}
\newtheorem{coro}[theorem]{Corollary}
\newtheorem{remark}[theorem]{Remark}
\newtheorem{ex}[theorem]{Example}
\newtheorem{claim}[theorem]{Claim}
\newtheorem{conj}[theorem]{Conjecture}
\newtheorem{definition}[theorem]{Definition}
\newtheorem{proposition}[theorem]{Proposition}
\newtheorem{application}{Application}

\newtheorem{corollary}[theorem]{Corollary}
\def\pR{\R\cup\{+\infty\}}

\newcommand{\ntx}{\textnormal}
\newcommand\N{{\mathbb N}}
\newcommand\Z{{\mathbb Z}}
\newcommand\Q{{\mathbb Q}}
\newcommand\R{{\mathbb R}}
\newcommand\E{{\mathbb E}}
\newcommand\mL{{\mathcal{L}}}
\newcommand{\mB}{\mathcal{B}}
\newcommand{\tow}{\rightharpoonup}	
\newcommand{\toe}{\hookrightarrow}	
\def\F{{\mathcal {F}}}
\def\e{\epsilon}
\def\P{{\mathbb P}}
\def\L{{\mathcal {L}}}
\def\X{{\mathcal {X}}}
\def\div{\textnormal{div} }
\def\inn{\textnormal{in} }
\def\on{\textnormal{on} }
\def\msX{\mathscr{X}}

\def\msE{\mathscr{E}}
\def\msM{\mathscr{M}}
\def\msH{\mathscr{H}}
\def\msF{\mathscr{F}}
\def\msL{\mathcal{L}}
\def \mbL{\mathbb{L}}
\def\vr{\vert}
\def\Vr{\Vert}

\def\F{{\cal F}}
\def\G{{\cal G}}
\def\L{{\cal L}}
\def\Rm{{\cal R}}
\def\div{\mbox{div} }
\def\t{\tilde}
\title{A Fractional Logistic-Type Elliptic Problem with Global Interactions:
Existence, Local Uniqueness, and the Fractional-to-Local Limit }
\author{ Alireza Khatib\thanks{Corresponding author: Alireza Khatib. Universidade Federal do Amazonas, Manaus-AM, Brazil.
Email: \texttt{alireza@ufam.edu.br}}  \quad  Somayeh Mousavinasr \thanks{Universidade Federal do Amazonas, Manaus-AM, Brazil, Somayeh@ufam.edu.br
 } \quad  Bashir Zeimarani \thanks{Instituto CERTI Amazônia (ICA), Manaus, AM, Brazil, bsz@certi.org.br
 }  }

\date{}

 \maketitle
 \begin{abstract}
We study a fractional logistic-type elliptic problem on a bounded domain
with homogeneous exterior conditions and linear and nonlinear integral
interactions. The interaction kernels may be nonsymmetric and
sign-changing, so the problem need not admit a variational formulation.
In the symmetric competitive regime, coercivity yields a weak solution
by global minimization. For arbitrary fixed values of the interaction
parameter, explicit smallness conditions allow us to apply Schauder's
fixed-point theorem without symmetry or sign restrictions on the kernels.
Additional sign assumptions give nonnegative solutions. When the
nonlinear exponent satisfies $q\geq2$, a contraction condition yields
uniqueness in an invariant ball and convergence of the Picard iteration.
Finally, as the fractional order approaches one, we prove subsequential
convergence to a solution of the local Dirichlet problem, together with
convergence of the fractional energies to the Dirichlet energy.
Uniqueness of the local solution in the limiting ball gives convergence
of the entire family.
\end{abstract}

\noindent\textbf{Keywords:}
restricted fractional Laplacian; nonlocal logistic equation;
global interaction; Schauder fixed-point theorem; local uniqueness;
fractional-to-local limit.

\medskip

\section{Introduction}\label{sec:introduction}

Nonlocal diffusion models arise naturally in the description of long-range
dispersal, anomalous transport, and heterogeneous spatial dynamics. In such
models, particles or individuals may perform jumps over arbitrary distances,
leading to integro-differential equations rather than classical local
diffusion equations. With the usual positive-operator convention,
$-(-\Delta)^s$ is the infinitesimal generator of a symmetric
$2s$-stable L\'evy process in $\mathbb R^N$, while the exterior condition
$u=0$ in $\mathbb R^N\setminus\Omega$ corresponds to killing the process
upon leaving $\Omega$. We refer to \cite{ChenKimSong2010} for the
probabilistic viewpoint. The functional framework for fractional Sobolev
spaces and their embeddings is developed in
\cite{DiNezzaPalatucciValdinoci2012}, whereas variational methods for
nonlocal Dirichlet problems are systematically studied in
\cite{ServadeiValdinoci2013}. See also
\cite{CaffarelliSilvestre2007,RosOtonSerra2014} for fundamental results
on fractional elliptic equations and exterior Dirichlet conditions.

Nonlocal effects may also enter through the interaction terms. In population
models, for instance, the growth or dispersal rate at a given point may
depend on the population distribution throughout the habitat. Such effects
lead naturally to integral operators and to equations combining local or
nonlocal diffusion with global interactions. A broad class of nonlocal
elliptic problems of this type has been studied by several authors; see,
among others,
\cite{ChipotRodrigues1992,chipot2006remarks,
AlvesCorreaChipot2017}.
Related logistic models with nonlocal diffusion and population-dependent
interaction mechanisms have also been investigated using bifurcation,
approximation, and fixed-point techniques; see, for example,
\cite{CintraOliveiraSantosSuarez2026}.

Fractional logistic-type equations provide another important class of
nonlocal population models; see, for example,
\cite{MontefuscoPellacciVerzini2013,QuaasXia2017,
IannizzottoMosconiPapageorgiou2023}. Nonlocal dispersal models involving
integral kernels were studied in
\cite{HutsonMartinezMischaikowVickers2003}, with further developments
concerning persistence, spectral quantities, and spreading phenomena in
\cite{Coville2006,Coville2010,ShenXie2016}. Nonlinear global interactions
are also closely related to Hartree--Choquard-type terms; see
\cite{MorozVanSchaftingen2017}.

Motivated by these developments, we study the fractional elliptic problem
\begin{equation}\label{P}
\begin{cases}
(-\Delta)^s u
-\displaystyle\int_{\Omega}K(x,y)u(y)\,dy
=
a(x)u
+\lambda |u|^{q-2}u
\displaystyle\int_{\Omega}Q(x,y)|u(y)|^q\,dy
+f(x),
& x\in\Omega,\\[0.3cm]
u=0,
& x\in\mathbb{R}^{N}\setminus\Omega.
\end{cases}
\end{equation}
Here $\Omega\subset\mathbb R^N$ is a bounded domain, $0<s<1$,
$a$ is a bounded potential, and $K$ and $Q$ are bounded interaction
kernels. The operator associated with $K$ describes a linear global
interaction, whereas the term involving $Q$ has effective order $2q-1$
and couples the value of the solution at $x$ with its distribution over
the entire domain.

Problem~\eqref{P} therefore combines two distinct sources of nonlocality:
the fractional diffusion operator and the global integral interactions.
Their simultaneous presence creates difficulties that are absent when
either mechanism is considered separately. The linear interaction
associated with $K$ may affect the coercivity of the fractional operator,
while the nonlinear term involving $Q$ is not variational in general.
Indeed, if $K$ or $Q$ is nonsymmetric, the corresponding operator need
not arise as the derivative of a scalar functional; moreover, for
sign-changing kernels, the interactions cannot be assigned a fixed
cooperative or competitive character. These features place
problem~\eqref{P} outside the standard variational framework used for many
fractional logistic and Choquard-type equations.

\paragraph*{Relation with the local problem and main contributions.}
A local counterpart of~\eqref{P}, with the classical Laplacian in place
of the restricted fractional Laplacian, was studied in
\cite{KhatibOliveiraDosAnjosFurtadoCacau2026}. The minimax mechanism
employed there builds on the broader variational framework developed in
\cite{KhatibMoameniMousavinasr2026}.The fractional setting involves a homogeneous exterior condition
and a normalized Dirichlet form on spaces depending on $s$.
Here we develop an existence theory for nonsymmetric and
sign-changing interaction kernels, obtain uniqueness under an
explicit contraction condition, and establish convergence of
solutions and energies as $s\uparrow1$. 

The approach developed in this paper separates the analysis into two
regimes. In the symmetric competitive case, the fractional energy and an
explicit coercivity condition lead to a direct variational argument. In
the general case, where symmetry and sign assumptions on the kernels are
dropped, we instead construct a compact solution operator associated with
an auxiliary fractional Dirichlet problem and apply Schauder's fixed-point
theorem. This formulation allows us to treat genuinely nonvariational
interactions within the fractional setting.

Beyond the existence theory, two further conclusions distinguish the
present analysis from its local counterpart. First, under a stronger
smallness condition, the solution operator becomes a contraction on an
invariant ball. This yields uniqueness within that ball and strong
convergence of the corresponding Picard iterates. Second, we establish a
normalized fractional-to-local limit as $s\to1^-$, showing that suitable
families of fractional solutions converge, up to subsequences, to
solutions of the associated local elliptic problem. 

\subsection{Main results}

We establish four complementary results for problem~\eqref{P}.

\begin{enumerate}

\item \emph{Competitive symmetric regime.}
Assume that $\lambda<0$, the kernels $K$ and $Q$ are symmetric,
$Q\geq0$, and the linear part is coercive. Then problem~\eqref{P}
admits a variational formulation, and the associated energy functional
possesses a global minimizer. Consequently, a weak solution exists and is
nontrivial whenever $f\neq0$. Under the additional assumptions $K\geq0$
and $f\geq0$, a nonnegative weak solution is obtained.

\item \emph{General nonvariational regime.}
Without symmetry or sign assumptions on $K$ and $Q$, we construct
a compact solution operator through an auxiliary fractional
Dirichlet problem. Under an explicit smallness condition on the
linear coefficients, each fixed $\lambda\in\mathbb R$ admits an
invariant-ball construction for sufficiently small forcing terms.
Schauder's fixed-point theorem then provides a weak solution.
Under additional positivity assumptions, nonnegative solutions
are obtained without requiring symmetry of the kernels.

\item \emph{Local uniqueness and Picard convergence.}
If $q\geq2$ and a stronger smallness condition holds, the solution
operator is a contraction on the invariant ball. Hence the weak solution
is unique within that ball, and the corresponding Picard iterates converge
strongly to it.

\item \emph{Fractional-to-local limit.}
For normalized fractional problems with estimates uniform in $s$, every
sequence of solutions with $s\to1^-$ admits a subsequence converging
strongly in $L^p(\Omega)$, for every $1\leq p<2^*$, to a weak solution
of the corresponding local elliptic problem. Moreover, the normalized fractional energies converge to the
Dirichlet energy of the limit. If the local solution is unique
in the limiting ball, the entire family converges.

\end{enumerate}

The paper is organized as follows.
Section~\ref{sec:preliminaries} introduces the fractional functional
setting, the interaction operators, and the weak formulation.
Section~\ref{sec:competitive-regime} treats the symmetric competitive
regime. Section~\ref{sec:fixed-point} develops the fixed-point approach,
the positive subregime, and the local uniqueness result.
Section~\ref{sec:local-limit} establishes the fractional-to-local
convergence. 
\section{Preliminaries}
\label{sec:preliminaries}
Throughout the paper, $\Omega\subset\mathbb R^N$, $N\geq2$, is a
bounded domain with Lipschitz boundary, $s\in(0,1)$, and
\[
1<q<2_s^*:=\frac{2N}{N-2s}.
\]
Notice that $N>2s$ under these assumptions.

\subsection{Fractional Sobolev setting}

The natural space associated with the homogeneous exterior condition
in problem~\eqref{P} is
\[
X_0^s(\Omega)
:=
\left\{
u\in H^s(\mathbb R^N):
u=0 \text{ a.e. in }\mathbb R^N\setminus\Omega
\right\}.
\]
For $u,v\in X_0^s(\Omega)$, define
\begin{equation}\label{eq:fractional-bilinear-form}
\mathcal E_s(u,v)
:=
\frac{C_{N,s}}{2}
\iint_{\mathbb R^{2N}}
\frac{(u(x)-u(y))(v(x)-v(y))}
{|x-y|^{N+2s}}\,dx\,dy,
\end{equation}
where
\[
C_{N,s}
=
\frac{4^s s\,\Gamma(N/2+s)}
{\pi^{N/2}\Gamma(1-s)}
\]
is the standard normalization constant. We equip $X_0^s(\Omega)$ with
the norm
\[
\|u\|_{X_0^s}:=\mathcal E_s(u,u)^{1/2}.
\]
By the fractional Poincar\'e inequality, this norm is equivalent,
for fixed $s$, to the norm inherited from $H^s(\mathbb R^N)$.
Thus $X_0^s(\Omega)$ is a Hilbert space. We denote its dual by
\[
X^{-s}(\Omega):=\bigl(X_0^s(\Omega)\bigr)^*.
\]

The restricted fractional Laplacian with homogeneous exterior
condition is understood weakly through
\[
\langle(-\Delta)^s u,v\rangle=\mathcal E_s(u,v),
\qquad u,v\in X_0^s(\Omega).
\]
For smooth functions, this agrees with the usual principal-value
definition. The normalization above is also consistent with the
local limit:
\[
\lim_{s\uparrow1}\mathcal E_s(u,u)
=
\int_\Omega|\nabla u|^2\,dx,
\qquad u\in C_c^\infty(\Omega).
\]

The embedding $X_0^s(\Omega)\hookrightarrow L^r(\Omega)$ is continuous
for $1\leq r\leq2_s^*$ and compact for $1\leq r<2_s^*$.
In particular,
\begin{equation}\label{eq:fractional-embedding}
\|u\|_{L^r(\Omega)}
\leq C_{S,r}\|u\|_{X_0^s},
\qquad u\in X_0^s(\Omega),\quad 1\leq r\leq2_s^*.
\end{equation}
Here $C_{S,r}$ may depend on $N,s,\Omega$, and $r$; no uniformity
with respect to $s$ is asserted at this stage.
We refer to \cite{DiNezzaPalatucciValdinoci2012} for the standard
fractional Sobolev properties used below.

\subsection{Nonlocal operators and weak solutions}

Let $K,Q\in L^\infty(\Omega\times\Omega)$ and
$a\in L^\infty(\Omega)$. No symmetry or sign assumptions on these
coefficients are imposed in this section. For brevity, $\|K\|_\infty$
and $\|Q\|_\infty$ denote the norms on $\Omega\times\Omega$,
whereas $\|a\|_\infty$ denotes the norm on $\Omega$.

For $u\in L^2(\Omega)$ and $u\in L^q(\Omega)$, respectively, set
\[
(T_Ku)(x):=\int_\Omega K(x,y)u(y)\,dy,
\qquad
B_Q(u)(x):=\int_\Omega Q(x,y)|u(y)|^q\,dy.
\]
The nonlinear interaction is
\[
\mathcal N_Q(u):=|u|^{q-2}u\,B_Q(u),
\]
where $|u|^{q-2}u$ is understood to be zero at $u=0$.

The boundedness of the kernels gives
\begin{equation}\label{eq:TK-estimate}
\|T_Ku\|_{L^2(\Omega)}
\leq |\Omega|\,\|K\|_\infty\|u\|_{L^2(\Omega)}
\end{equation}
and
\begin{equation}\label{eq:BQ-estimate}
\|B_Q(u)\|_{L^\infty(\Omega)}
\leq \|Q\|_\infty\|u\|_{L^q(\Omega)}^q.
\end{equation}
Writing $q'=q/(q-1)$, we consequently obtain
\begin{equation}\label{eq:nonlinear-Lqprime-bound}
\|\mathcal N_Q(u)\|_{L^{q'}(\Omega)}
\leq \|Q\|_\infty\|u\|_{L^q(\Omega)}^{2q-1}.
\end{equation}
H\"older's inequality and \eqref{eq:fractional-embedding} therefore yield
\begin{equation}\label{eq:nonlinear-dual-bound}
\|\mathcal N_Q(u)\|_{X^{-s}(\Omega)}
\leq
\|Q\|_\infty C_{S,q}^{2q}
\|u\|_{X_0^s}^{2q-1}.
\end{equation}
Thus $\mathcal N_Q:X_0^s(\Omega)\to X^{-s}(\Omega)$ is well defined.
In particular, the interaction requires $L^q$ integrability, rather
than $L^{2q}$ integrability, despite its homogeneity of degree $2q-1$.

\begin{lemma}[Continuity and compactness of the interactions]
\label{lem:interaction-compactness}
The map $\mathcal N_Q:L^q(\Omega)\to L^{q'}(\Omega)$ is continuous.
Moreover, if $u_j\rightharpoonup u$ in $X_0^s(\Omega)$, then
\[
au_j\to au,\qquad
T_Ku_j\to T_Ku,\qquad
\mathcal N_Q(u_j)\to\mathcal N_Q(u)
\quad\text{in }X^{-s}(\Omega).
\]
\end{lemma}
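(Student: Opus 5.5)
The plan has two parts. First I prove continuity of $\mathcal N_Q$ from $L^q$ to $L^{q'}$ by splitting the difference. Then I deduce the weak-to-strong statements from the compact embedding $X_0^s(\Omega)\hookrightarrow L^r(\Omega)$ for $r<2_s^*$, combined with H\"older duality against the continuous embedding.

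\emph{Continuity.} Let $u_j\to u$ in $L^q(\Omega)$. Write
\[
\mathcal N_Q(u_j)-\mathcal N_Q(u)=\bigl(|u_j|^{q-2}u_j-|u|^{q-2}u\bigr)B_Q(u_j)+|u|^{q-2}u\bigl(B_Q(u_j)-B_Q(u)\bigr).
\]
For the second term, use $\bigl||u_j(y)|^q-|u(y)|^q\bigr|$ together with $\||u_j|^q-|u|^q\|_{L^1}\to0$. The latter holds because the Nemytskii map $v\mapsto|v|^q$ is continuous from $L^q$ to $L^1$. This gives $\|B_Q(u_j)-B_Q(u)\|_{L^\infty}\le\|Q\|_\infty\||u_j|^q-|u|^q\|_{L^1}\to0$. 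Since $\||u|^{q-2}u\|_{L^{q'}}=\|u\|_{L^q}^{q-1}$, the second term tends to $0$ in $L^{q'}$.

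For the first term, $B_Q(u_j)$ is bounded in $L^\infty$ by \eqref{eq:BQ-estimate}. It therefore suffices to have $|u_j|^{q-2}u_j\to|u|^{q-2}u$ in $L^{q'}$, which is again continuity of a Nemytskii operator. The growth is $|g(t)|=|t|^{q-1}$ and $(q-1)q'=q$, so Krasnosel'skii's theorem applies. Alternatively, one can pass to a subsequence converging a.e. with an $L^q$ dominating function and use dominated convergence; the subsequence principle then yields convergence of the full sequence.

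\emph{Compactness.} Let $u_j\rightharpoonup u$ in $X_0^s(\Omega)$. By compactness of the embedding, $u_j\to u$ strongly in $L^2(\Omega)$ and in $L^q(\Omega)$, since $2,q<2_s^*$. For any $v$ with $\|v\|_{X_0^s}\le1$, H\"older's inequality and \eqref{eq:fractional-embedding} give
\[
|\langle a(u_j-u),v\rangle|\le\|a\|_\infty\|u_j-u\|_{L^2}C_{S,2}.
\]
Similarly, \eqref{eq:TK-estimate} gives $|\langle T_K(u_j-u),v\rangle|\le|\Omega|\|K\|_\infty\|u_j-u\|_{L^2}C_{S,2}$. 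Finally,
\[
|\langle \mathcal N_Q(u_j)-\mathcal N_Q(u),v\rangle|\le\|\mathcal N_Q(u_j)-\mathcal N_Q(u)\|_{L^{q'}}C_{S,q}.
\]
Taking the supremum over $v$ and using the continuity part gives convergence in $X^{-s}(\Omega)$.

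The only delicate step is the Nemytskii continuity $L^q\to L^{q'}$ for $t\mapsto|t|^{q-2}t$ when $1<q<2$. In that range the map is not locally Lipschitz, so I would rely on the a.e.-subsequence and domination argument rather than on a pointwise Lipschitz bound.
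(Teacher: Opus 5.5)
Your proof is correct and follows essentially the same route as the paper. You split $\mathcal N_Q(u_j)-\mathcal N_Q(u)$ as a product, use $\|B_Q(u_j)-B_Q(u)\|_{L^\infty}\le\|Q\|_\infty\||u_j|^q-|u|^q\|_{L^1}$ together with Nemytskii continuity of the power maps, and conclude via compact embedding into $L^2$ and $L^q$ and the continuous embeddings $L^2,L^{q'}\hookrightarrow X^{-s}(\Omega)$. The only difference is that you justify in detail (dominated convergence with the subsequence principle, and explicit duality estimates) what the paper cites as standard continuity of power maps.
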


\begin{proof}
Suppose first that $u_j\to u$ in $L^q(\Omega)$. The standard
continuity properties of power maps give
\[
|u_j|^q\to|u|^q\quad\text{in }L^1(\Omega),
\qquad
|u_j|^{q-2}u_j\to|u|^{q-2}u
\quad\text{in }L^{q'}(\Omega).
\]
Since
\[
\|B_Q(u_j)-B_Q(u)\|_{L^\infty(\Omega)}
\leq
\|Q\|_\infty
\bigl\||u_j|^q-|u|^q\bigr\|_{L^1(\Omega)},
\]
we have $B_Q(u_j)\to B_Q(u)$ in $L^\infty(\Omega)$.
Taking products proves
$\mathcal N_Q(u_j)\to\mathcal N_Q(u)$ in $L^{q'}(\Omega)$.

Now let $u_j\rightharpoonup u$ in $X_0^s(\Omega)$.
The compact embeddings give strong convergence in both
$L^2(\Omega)$ and $L^q(\Omega)$. Consequently,
$au_j\to au$ and $T_Ku_j\to T_Ku$ in $L^2(\Omega)$, while the
first part gives convergence of the nonlinear interaction in
$L^{q'}(\Omega)$. The continuous embeddings
$L^2(\Omega)\hookrightarrow X^{-s}(\Omega)$ and
$L^{q'}(\Omega)\hookrightarrow X^{-s}(\Omega)$ complete the proof.
\end{proof}

The local Lipschitz estimate for $\mathcal N_Q$ when $q\geq2$,
including the constant needed for the contraction argument,
will be established in Section~\ref{sec:fixed-point}.

Define the linear form
\begin{equation}\label{eq:linear-bilinear-form}
\mathcal A_s(u,v)
:=
\mathcal E_s(u,v)
-\int_\Omega(T_Ku)v\,dx
-\int_\Omega a(x)uv\,dx,
\qquad u,v\in X_0^s(\Omega).
\end{equation}
It is continuous, with
\begin{equation}\label{eq:linear-form-estimate}
|\mathcal A_s(u,v)|
\leq
\left[
1+C_{S,2}^2
\bigl(|\Omega|\|K\|_\infty+\|a\|_\infty\bigr)
\right]
\|u\|_{X_0^s}\|v\|_{X_0^s}.
\end{equation}
The associated operator
$\mathcal L_s:X_0^s(\Omega)\to X^{-s}(\Omega)$ is defined by
\[
\langle\mathcal L_su,v\rangle:=\mathcal A_s(u,v);
\]
formally, $\mathcal L_su=(-\Delta)^su-T_Ku-au$.

\begin{definition}\label{def:weak-solution}
Let $\lambda\in\mathbb R$ and $f\in X^{-s}(\Omega)$.
A function $u\in X_0^s(\Omega)$ is a weak solution of
problem~\eqref{P} if
\begin{equation}\label{eq:weak-formulation}
\mathcal A_s(u,\varphi)
=
\lambda\int_\Omega\mathcal N_Q(u)\varphi\,dx
+\langle f,\varphi\rangle
\qquad\text{for every }\varphi\in X_0^s(\Omega).
\end{equation}
Equivalently,
$\mathcal L_su=\lambda\mathcal N_Q(u)+f$ in $X^{-s}(\Omega)$.
\end{definition}

For $f\in X^{-s}(\Omega)$, the notation $f\geq0$ means that
$\langle f,\varphi\rangle\geq0$ for every nonnegative
$\varphi\in X_0^s(\Omega)$.
Additional symmetry, sign, coercivity, and smallness assumptions
will be specified in the sections where they are used.
\section{Existence in the competitive symmetric regime}
\label{sec:competitive-regime}

We first consider symmetric kernels and a competitive nonlinear
interaction. Under coercivity of the linear form, direct minimization
yields a weak solution for every forcing term in $X^{-s}(\Omega)$.
Additional sign assumptions ensure that every weak solution is
nonnegative.

\subsection{Variational framework and coercivity}
\label{subsec:variational-framework}

In addition to the standing assumptions of
Section~\ref{sec:preliminaries}, suppose that
\begin{equation}\label{eq:competitive-assumptions}
\begin{gathered}
a\in L^\infty(\Omega),\qquad a\geq0,\qquad a\not\equiv0,\\
K,Q\in L^\infty(\Omega\times\Omega),\qquad
K(x,y)=K(y,x),\qquad Q(x,y)=Q(y,x),\\
Q\geq0\quad\text{a.e. in }\Omega\times\Omega,
\qquad \lambda<0,\qquad f\in X^{-s}(\Omega).
\end{gathered}
\end{equation}
All kernel symmetries are understood almost everywhere.

For $u\in X_0^s(\Omega)$, set
\begin{equation}\label{eq:Q-functional}
\mathcal Q(u)
:=
\iint_{\Omega\times\Omega}
Q(x,y)|u(x)|^q|u(y)|^q\,dx\,dy.
\end{equation}
The kernel bound and the Sobolev embedding give
\begin{equation}\label{eq:Q-functional-bound}
0\leq\mathcal Q(u)
\leq \|Q\|_\infty\|u\|_{L^q(\Omega)}^{2q}
\leq \|Q\|_\infty C_{S,q}^{2q}\|u\|_{X_0^s}^{2q}.
\end{equation}
The associated energy is
\begin{equation}\label{eq:energy-functional}
\mathcal J_\lambda(u)
:=
\frac12\mathcal A_s(u,u)
-\frac{\lambda}{2q}\mathcal Q(u)-\langle f,u\rangle
=
\frac12\mathcal A_s(u,u)
+\frac{|\lambda|}{2q}\mathcal Q(u)-\langle f,u\rangle.
\end{equation}

The symmetry of $K$ makes $\mathcal A_s$ symmetric. Moreover,
the map $u\mapsto |u|^q$ is continuously differentiable from
$L^q(\Omega)$ into $L^1(\Omega)$, and the symmetry of $Q$ gives
\[
\mathcal Q'(u)[v]
=
2q\int_\Omega B_Q(u)|u|^{q-2}uv\,dx.
\]
Consequently, $\mathcal J_\lambda\in C^1(X_0^s(\Omega),\mathbb R)$,
with
\begin{equation}\label{eq:J-derivative}
\langle\mathcal J_\lambda'(u),v\rangle
=
\mathcal A_s(u,v)
-\lambda\int_\Omega\mathcal N_Q(u)v\,dx
-\langle f,v\rangle.
\end{equation}
Thus its critical points are precisely the weak solutions of
problem~\eqref{P}.

The existence argument requires
\begin{equation}\label{eq:direct-coercivity}
\mathcal A_s(u,u)\geq\alpha_0\|u\|_{X_0^s}^{2}
\qquad\text{for every }u\in X_0^s(\Omega),
\end{equation}
for some $\alpha_0>0$. We next give a spectral criterion for
this condition.

Define
\begin{equation}\label{eq:BK-definition}
\mathcal B_{K,s}(u,v)
:=
\mathcal E_s(u,v)-\int_\Omega(T_Ku)v\,dx,
\end{equation}
so that
\begin{equation}\label{eq:A-BK-relation}
\mathcal A_s(u,v)
=
\mathcal B_{K,s}(u,v)-\int_\Omega a(x)uv\,dx.
\end{equation}
A sufficient condition for coercivity of $\mathcal B_{K,s}$ is
\begin{equation}\label{eq:K-smallness}
|\Omega|C_{S,2}^{2}\|K\|_\infty<1.
\end{equation}
Indeed, \eqref{eq:TK-estimate} implies
\begin{equation}\label{eq:BK-coercivity}
\mathcal B_{K,s}(u,u)
\geq
\bigl(1-|\Omega|C_{S,2}^{2}\|K\|_\infty\bigr)
\|u\|_{X_0^s}^{2}.
\end{equation}
Condition~\eqref{eq:K-smallness} is only sufficient; the following
criterion applies whenever $\mathcal B_{K,s}$ is coercive.

\begin{proposition}[A spectral criterion for coercivity]
\label{thm:mu1-implies-coercivity}
Suppose that
$\mathcal B_{K,s}(u,u)\geq\beta_0\|u\|_{X_0^s}^{2}$
for some $\beta_0>0$. Then
\begin{equation}\label{eq:mu1-definition}
\mu_1
:=
\inf_{\substack{u\in X_0^s(\Omega)\\
                 \int_\Omega au^2\,dx>0}}
\frac{\mathcal B_{K,s}(u,u)}
{\displaystyle\int_\Omega a(x)u^2\,dx}
\end{equation}
is finite, positive, and attained by a function $\varphi_1$ satisfying
\[
\int_\Omega a(x)\varphi_1^2\,dx=1.
\]
Moreover,
\begin{equation}\label{eq:weighted-eigenvalue-weak}
\mathcal B_{K,s}(\varphi_1,v)
=
\mu_1\int_\Omega a(x)\varphi_1v\,dx
\qquad\text{for every }v\in X_0^s(\Omega).
\end{equation}
If $\mu_1>1$, then \eqref{eq:direct-coercivity} holds with
\begin{equation}\label{eq:As-coercivity-explicit}
\alpha_0=\beta_0\left(1-\frac1{\mu_1}\right).
\end{equation}
\end{proposition}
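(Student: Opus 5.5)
The plan is the direct method on the constraint set $\mathcal M:=\{u\in X_0^s(\Omega):\int_\Omega au^2\,dx=1\}$, followed by a Lagrange multiplier computation and then a convex-combination argument for coercivity. First, $\mathcal M\neq\emptyset$: since $a\geq0$, $a\not\equiv0$, some set of positive measure has $a>\delta>0$, and density of $C_c^\infty(\Omega)$ gives $u$ with $\int_\Omega au^2>0$. By homogeneity, $\mu_1=\inf_{\mathcal M}\mathcal B_{K,s}(u,u)$, so $\mu_1<\infty$. For $u\in\mathcal M$,
\[
1=\int_\Omega au^2\,dx\leq\|a\|_\infty C_{S,2}^2\|u\|_{X_0^s}^2\leq\frac{\|a\|_\infty C_{S,2}^2}{\beta_0}\,\mathcal B_{K,s}(u,u),
\]
which gives $\mu_1\geq\beta_0/(\|a\|_\infty C_{S,2}^2)>0$.

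For attainment, take a minimizing sequence $(u_j)\subset\mathcal M$. Coercivity of $\mathcal B_{K,s}$ bounds $\|u_j\|_{X_0^s}$, so after passing to a subsequence $u_j\rightharpoonup\varphi_1$ in $X_0^s(\Omega)$ and $u_j\to\varphi_1$ in $L^2(\Omega)$ by compact embedding. The constraint then passes to the limit, since $a\in L^\infty$, so $\varphi_1\in\mathcal M$. For the functional, write $\mathcal B_{K,s}(u,u)=\mathcal E_s(u,u)-\int(T_Ku)u$. The first term is weakly lower semicontinuous as the square of a Hilbert norm. The second converges by \eqref{eq:TK-estimate} and strong $L^2$ convergence (alternatively, by Lemma~\ref{lem:interaction-compactness}). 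Hence $\mathcal B_{K,s}(\varphi_1,\varphi_1)\leq\liminf_j\mathcal B_{K,s}(u_j,u_j)=\mu_1$, and $\varphi_1$ is a minimizer.

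For \eqref{eq:weighted-eigenvalue-weak}, I would avoid manifold machinery. For $v\in X_0^s$ and small $t$, $\int a(\varphi_1+tv)^2>0$, so the Rayleigh quotient $R(t)$ of $\varphi_1+tv$ is minimized at $t=0$. Symmetry of $K$ makes $\mathcal B_{K,s}$ symmetric, so the numerator has derivative $2\mathcal B_{K,s}(\varphi_1,v)$ and the denominator has derivative $2\int a\varphi_1v$. Setting $R'(0)=0$ gives the identity. If symmetry were not assumed, the derivative would involve the symmetrized form; under the standing hypotheses \eqref{eq:competitive-assumptions}, $K$ is symmetric.

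For coercivity, the definition of $\mu_1$ gives $\int_\Omega au^2\leq\mu_1^{-1}\mathcal B_{K,s}(u,u)$ for all $u$; this is trivial when $\int au^2=0$, because $\mathcal B_{K,s}\geq0$. Hence
\[
\mathcal A_s(u,u)\geq\Bigl(1-\frac1{\mu_1}\Bigr)\mathcal B_{K,s}(u,u)\geq\beta_0\Bigl(1-\frac1{\mu_1}\Bigr)\|u\|_{X_0^s}^2,
\]
using $\mu_1>1$, which is \eqref{eq:As-coercivity-explicit}. The main technical point is the weak lower semicontinuity step, which requires splitting $\mathcal B_{K,s}$ so that the possibly indefinite $T_K$ term is handled by compactness rather than by convexity.
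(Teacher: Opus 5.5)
Your proof is correct and follows essentially the same route as the paper: the direct method on the normalized constraint set, weak lower semicontinuity of $\mathcal E_s$ combined with $L^2$-compactness for the $T_K$ term, an Euler--Lagrange identity, and the inequality $\int_\Omega au^2\,dx\leq\mu_1^{-1}\mathcal B_{K,s}(u,u)$ for the coercivity bound. The only difference is that you derive \eqref{eq:weighted-eigenvalue-weak} by differentiating the Rayleigh quotient along $\varphi_1+tv$ rather than citing the Lagrange multiplier rule, which is a more elementary form of the same step. You also rightly make explicit that this step, like the paper's, uses the symmetry of $K$ from \eqref{eq:competitive-assumptions}.
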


\begin{proof}
Since $a\geq0$ and $a\not\equiv0$, there exists
$u\in C_c^\infty(\Omega)$ with $\int_\Omega au^2\,dx>0$.
Thus the admissible set is nonempty and $\mu_1<\infty$.
Coercivity and the $L^2$ embedding give
\[
\mu_1\geq
\frac{\beta_0}{\|a\|_\infty C_{S,2}^{2}}>0.
\]

Choose a minimizing sequence $(u_j)$ normalized by
$\int_\Omega au_j^2\,dx=1$. It is bounded in $X_0^s(\Omega)$,
so, after passing to a subsequence,
\[
u_j\rightharpoonup\varphi_1\quad\text{in }X_0^s(\Omega),
\qquad
u_j\to\varphi_1\quad\text{in }L^2(\Omega).
\]
Consequently, $\int_\Omega a\varphi_1^2\,dx=1$.
The fractional energy is weakly lower semicontinuous, and the
$K$-term is continuous under strong $L^2$ convergence.
Hence $\varphi_1$ attains $\mu_1$.
The Lagrange multiplier rule yields
\eqref{eq:weighted-eigenvalue-weak}, with the multiplier identified
by testing with $\varphi_1$.

Finally, the definition of $\mu_1$ implies
\[
\int_\Omega au^2\,dx
\leq\frac1{\mu_1}\mathcal B_{K,s}(u,u)
\qquad\text{for every }u\in X_0^s(\Omega);
\]
the inequality also holds when its left-hand side is zero.
Therefore, if $\mu_1>1$,
\[
\mathcal A_s(u,u)
\geq
\left(1-\frac1{\mu_1}\right)\mathcal B_{K,s}(u,u)
\geq
\beta_0\left(1-\frac1{\mu_1}\right)\|u\|_{X_0^s}^{2}.
\]
\end{proof}

When $K\equiv0$, the quotient \eqref{eq:mu1-definition} reduces
to the usual weighted first Dirichlet eigenvalue of the restricted
fractional Laplacian. For a general symmetric kernel, no positivity
or simplicity of $\varphi_1$ is needed here.

\subsection{Existence and nonnegativity}
\label{subsec:global-minimizer}

We now assume \eqref{eq:direct-coercivity}, either directly or
through Proposition~\ref{thm:mu1-implies-coercivity}.

\begin{theorem}[Existence of a global minimizer]
\label{thm:competitive-global-minimizer}
Assume \eqref{eq:competitive-assumptions} and
\eqref{eq:direct-coercivity}. Then $\mathcal J_\lambda$ admits
a global minimizer in $X_0^s(\Omega)$, which is a weak solution
of problem~\eqref{P}. If $f\neq0$, every weak solution is nontrivial.
If, in addition, $K\geq0$ almost everywhere and $f\geq0$ in
$X^{-s}(\Omega)$, every weak solution is nonnegative.
\end{theorem}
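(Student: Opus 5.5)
The plan is the direct method of the calculus of variations for $\mathcal J_\lambda$ on $X_0^s(\Omega)$, followed by the critical-point identity \eqref{eq:J-derivative}, and finally a truncation argument for the sign statement.

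\textbf{Existence.} First I establish coercivity. By \eqref{eq:direct-coercivity}, $\mathcal Q\geq0$ from \eqref{eq:Q-functional-bound}, and $\lambda<0$,
\[
\mathcal J_\lambda(u)\geq\frac{\alpha_0}{2}\|u\|_{X_0^s}^2-\|f\|_{X^{-s}}\|u\|_{X_0^s},
\]
so $\mathcal J_\lambda$ is coercive and bounded below. Take a minimizing sequence. It is bounded, so after passing to a subsequence $u_j\rightharpoonup u$ in $X_0^s(\Omega)$ and $u_j\to u$ in $L^2(\Omega)$ and in $L^q(\Omega)$, by compactness of the embedding since $q<2_s^*$.

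Next I check weak lower semicontinuity term by term:
\begin{itemize}
\item $\mathcal E_s(u,u)$ is weakly lower semicontinuous, being a squared Hilbert norm.
\item The terms $\int(T_Ku)u$ and $\int au^2$ converge under strong $L^2$ convergence, using \eqref{eq:TK-estimate}.
\item $\mathcal Q(u_j)\to\mathcal Q(u)$, because $|u_j|^q\to|u|^q$ in $L^1(\Omega)$ and $Q$ is bounded. The bilinear estimate $|\mathcal Q(v)-\mathcal Q(w)|\leq\|Q\|_\infty\||v|^q-|w|^q\|_{L^1}(\|v\|_q^q+\|w\|_q^q)$ gives this directly.
\item $\langle f,u_j\rangle\to\langle f,u\rangle$ by weak convergence.
\end{itemize}
Hence $u$ is a global minimizer. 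Since $\mathcal J_\lambda\in C^1$, we get $\mathcal J_\lambda'(u)=0$, and by \eqref{eq:J-derivative} $u$ is a weak solution.

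\textbf{Nontriviality.} If $u=0$ were a weak solution, then \eqref{eq:weak-formulation} would give $\langle f,\varphi\rangle=0$ for every $\varphi$, because $\mathcal A_s(0,\cdot)=0$ and $\mathcal N_Q(0)=0$. That means $f=0$. So $f\neq0$ rules out the trivial solution for every weak solution, not only for the minimizer.

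\textbf{Nonnegativity.} This is the step needing most care. Let $u$ be any weak solution, write $u=u^+-u^-$, and test \eqref{eq:weak-formulation} with $\varphi=u^-\geq0$. This is admissible because truncation preserves $X_0^s(\Omega)$.

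On the right-hand side, $\mathcal N_Q(u)u^-=-B_Q(u)|u^-|^q\leq0$ since $Q\geq0$, so $\lambda\int\mathcal N_Q(u)u^-\geq0$. Also $\langle f,u^-\rangle\geq0$. Thus $\mathcal A_s(u,u^-)\geq0$.

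On the left-hand side I use three facts:
\begin{itemize}
\item the pointwise inequality $(u^+(x)-u^+(y))(u^-(x)-u^-(y))\leq0$, which gives $\mathcal E_s(u^+,u^-)\leq0$;
\item $\int(T_Ku^+)u^-\geq0$ when $K\geq0$;
\item $\int a\,u\,u^-=-\int a(u^-)^2$.
\end{itemize}
Together these yield $\mathcal A_s(u,u^-)\leq-\mathcal A_s(u^-,u^-)$. Combining with the previous inequality gives $\mathcal A_s(u^-,u^-)\leq0$, and \eqref{eq:direct-coercivity} then forces $u^-=0$.

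The main obstacle I anticipate is verifying the sign of the cross term $\mathcal E_s(u^+,u^-)$, which is where the nonlocal form differs from the local case. The pointwise product inequality is expected to handle it, since at points where $u^+$ and $u^-$ are both nonzero on opposite ends, the product is $-(u^+(x)u^-(y)+u^-(x)u^+(y))\leq0$.
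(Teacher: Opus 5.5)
Your proposal is correct and follows essentially the same route as the paper. It uses the direct method via coercivity and weak lower semicontinuity, identifies the minimizer as a weak solution through the derivative identity, obtains nontriviality from $\mathcal A_s(0,\cdot)=0$, and proves nonnegativity by testing the equation with the negative part and applying coercivity. The differences are cosmetic. You get lower semicontinuity by treating the $a$- and $K$-terms as compact perturbations of $\mathcal E_s$, where the paper uses convexity of the coercive form $\mathcal A_s$. You test with $u^-$ using $\mathcal E_s(u^+,u^-)\leq0$, where the paper tests with $-u^-$ using the equivalent scalar inequality $(r-t)(r^--t^-)\leq-|r^--t^-|^2$.
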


\begin{proof}
Since $\mathcal Q\geq0$, coercivity gives
\[
\mathcal J_\lambda(u)
\geq
\frac{\alpha_0}{2}\|u\|_{X_0^s}^{2}
-\|f\|_{X^{-s}}\|u\|_{X_0^s}.
\]
Thus $\mathcal J_\lambda$ is bounded below and coercive.

We next verify weak lower semicontinuity.
Let $u_j\rightharpoonup u$ in $X_0^s(\Omega)$.
Since $\mathcal A_s$ is continuous, symmetric, and coercive,
its quadratic form is weakly lower semicontinuous.
The compact embedding into $L^q(\Omega)$ gives
$u_j\to u$ in $L^q(\Omega)$, hence
$|u_j|^q\to|u|^q$ in $L^1(\Omega)$. In particular,
\[
|\mathcal Q(u_j)-\mathcal Q(u)|
\leq
\|Q\|_\infty
\left(\|u_j\|_{L^q}^{q}+\|u\|_{L^q}^{q}\right)
\bigl\||u_j|^q-|u|^q\bigr\|_{L^1}
\longrightarrow0.
\]
Together with $\langle f,u_j\rangle\to\langle f,u\rangle$,
this proves weak lower semicontinuity of $\mathcal J_\lambda$.

The direct method therefore yields a global minimizer
$u_\lambda\in X_0^s(\Omega)$.
By \eqref{eq:J-derivative}, it is a weak solution.
If zero were a weak solution, \eqref{eq:weak-formulation}
would imply $f=0$; thus every weak solution is nontrivial
when $f\neq0$.

Finally, assume $K\geq0$ and $f\geq0$, and let $u$ be any weak
solution. Set $w:=u^-\in X_0^s(\Omega)$ and test with $-w$.
The scalar inequality
\[
(r-t)(r^--t^-)\leq-|r^--t^-|^2
\]
gives $\mathcal E_s(u,-w)\geq\mathcal E_s(w,w)$.
Writing $u=u^+-w$, we also have
\[
\int_\Omega(T_Ku)w\,dx
=
\int_\Omega(T_Ku^+)w\,dx-\int_\Omega(T_Kw)w\,dx
\geq-\int_\Omega(T_Kw)w\,dx.
\]
Since $uw=-w^2$, these inequalities yield
\[
\mathcal A_s(u,-w)\geq\mathcal A_s(w,w).
\]
Moreover, $B_Q(u)\geq0$ and
$\mathcal N_Q(u)(-w)=B_Q(u)w^q$. Hence the weak equation gives
\[
\mathcal A_s(w,w)
+|\lambda|\int_\Omega B_Q(u)w^q\,dx
\leq-\langle f,w\rangle\leq0.
\]
Coercivity implies $w=0$, and therefore $u\geq0$.
\end{proof}

\begin{remark}[The homogeneous problem]
\label{rem:zero-forcing-competitive}
Under the assumptions of
Theorem~\ref{thm:competitive-global-minimizer}, if $f=0$, then
zero is the unique weak solution. Indeed, testing the equation
with $u$ gives
\[
\mathcal A_s(u,u)+|\lambda|\mathcal Q(u)=0,
\]
and coercivity forces $u=0$.
Thus a nonzero forcing term is necessary for a nontrivial
solution in this regime.
\end{remark}
\section{A fixed-point approach in the general regime}
\label{sec:fixed-point}

We now allow arbitrary $\lambda\in\mathbb R$ and nonsymmetric,
sign-changing kernels. When $Q\geq0$, this includes both competitive
and cooperative interactions; more generally, the interaction may
have no fixed sign, and no variational structure is required.
Under suitable smallness conditions, we establish existence by
Schauder's theorem and, for $q\geq2$, local uniqueness by the
contraction principle.

Throughout this section, we assume
\begin{equation}\label{eq:general-assumptions}
0<s<1,\qquad 1<q<2_s^*,\qquad \lambda\in\mathbb R,
\qquad a\in L^\infty(\Omega),\qquad
K,Q\in L^\infty(\Omega\times\Omega),\qquad
f\in X^{-s}(\Omega).
\end{equation}
No symmetry or sign assumptions are imposed unless explicitly stated.

\subsection{Schauder existence and nonnegativity}
\label{subsec:existence-schauder}

For $u\in X_0^s(\Omega)$, define $G(u)\in X^{-s}(\Omega)$ by
\begin{equation}\label{eq:G-definition}
\langle G(u),\varphi\rangle
:=
\int_\Omega\bigl(au+T_Ku+\lambda\mathcal N_Q(u)\bigr)\varphi\,dx
+\langle f,\varphi\rangle,
\qquad \varphi\in X_0^s(\Omega).
\end{equation}
Set
\begin{equation}\label{eq:eta-gamma-definition}
\eta
:=
C_{S,2}^{2}\bigl(\|a\|_\infty+|\Omega|\|K\|_\infty\bigr),
\qquad
\gamma
:=
|\lambda|\|Q\|_\infty C_{S,q}^{2q}.
\end{equation}
The estimates of Section~\ref{sec:preliminaries} give
\begin{equation}\label{eq:G-estimate}
\|G(u)\|_{X^{-s}}
\leq
\eta\|u\|_{X_0^s}
+\gamma\|u\|_{X_0^s}^{2q-1}
+\|f\|_{X^{-s}}.
\end{equation}

For each $u\in X_0^s(\Omega)$, the Lax--Milgram theorem provides
a unique $S(u)\in X_0^s(\Omega)$ satisfying
\begin{equation}\label{eq:def-S-schauder}
\mathcal E_s(S(u),\varphi)
=
\langle G(u),\varphi\rangle
\qquad\text{for every }\varphi\in X_0^s(\Omega).
\end{equation}
In particular,
\begin{equation}\label{eq:S-basic-estimate-schauder}
\|S(u)\|_{X_0^s}\leq\|G(u)\|_{X^{-s}},
\qquad
\|S(u)-S(v)\|_{X_0^s}
\leq\|G(u)-G(v)\|_{X^{-s}}.
\end{equation}
By construction, the fixed points of $S$ are precisely the weak
solutions of problem~\eqref{P}.

\begin{lemma}[Compactness of the solution operator]
\label{prop:S-schauder-properties}
The map $S:X_0^s(\Omega)\to X_0^s(\Omega)$ is continuous and
maps bounded sets into relatively compact sets.
\end{lemma}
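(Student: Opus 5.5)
The proof splits $S$ as the composition $S=(-\Delta)^{-s}\circ G$. Here $(-\Delta)^{-s}:X^{-s}(\Omega)\to X_0^s(\Omega)$ is the Riesz/Lax--Milgram isometry given by \eqref{eq:def-S-schauder}, and $G$ is as in \eqref{eq:G-definition}. By \eqref{eq:S-basic-estimate-schauder}, the solution map is $1$-Lipschitz from $X^{-s}$ into $X_0^s$. Therefore both continuity and compactness of $S$ reduce to the corresponding properties of $G:X_0^s(\Omega)\to X^{-s}(\Omega)$. Both will come from Lemma~\ref{lem:interaction-compactness}. The forcing term $f$ is a constant translate and plays no role.

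\textbf{Continuity.} Let $u_j\to u$ strongly in $X_0^s(\Omega)$. Strong convergence implies weak convergence, so Lemma~\ref{lem:interaction-compactness} gives $au_j\to au$, $T_Ku_j\to T_Ku$, and $\mathcal N_Q(u_j)\to\mathcal N_Q(u)$ in $X^{-s}(\Omega)$. Here elements of $L^2$ and $L^{q'}$ are identified with functionals via $\varphi\mapsto\int_\Omega g\varphi\,dx$. Hence
\[
\|G(u_j)-G(u)\|_{X^{-s}}\leq \|a(u_j-u)\|_{X^{-s}}+\|T_K(u_j-u)\|_{X^{-s}}+|\lambda|\,\|\mathcal N_Q(u_j)-\mathcal N_Q(u)\|_{X^{-s}}\to0,
\]
and \eqref{eq:S-basic-estimate-schauder} yields $S(u_j)\to S(u)$ in $X_0^s(\Omega)$.

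\textbf{Relative compactness.} Let $B\subset X_0^s(\Omega)$ be bounded and take any sequence $(u_j)\subset B$. Since $X_0^s(\Omega)$ is a Hilbert space, a subsequence converges weakly, $u_{j_k}\rightharpoonup u$. The second part of Lemma~\ref{lem:interaction-compactness} gives $G(u_{j_k})\to G(u)$ strongly in $X^{-s}(\Omega)$. Then \eqref{eq:S-basic-estimate-schauder} shows that $S(u_{j_k})\to S(u)$ strongly in $X_0^s(\Omega)$. So every sequence in $S(B)$ has a convergent subsequence, and $S(B)$ is relatively compact. The limit $u$ need not lie in $B$, but this is harmless: we only need convergence in $X_0^s(\Omega)$.

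\textbf{Main point to check.} The only delicate step is that Lemma~\ref{lem:interaction-compactness} really gives convergence in the dual norm. This needs the compact embeddings $X_0^s\hookrightarrow L^2$ and $X_0^s\hookrightarrow L^q$. Both hold because $q<2_s^*$ and $2<2_s^*$. It also needs the continuous embeddings $L^2,L^{q'}\hookrightarrow X^{-s}$, which are duals of the Sobolev embedding \eqref{eq:fractional-embedding} with $r=2$ and $r=q$. All of these are already established, so the proof is essentially a direct assembly of these facts.
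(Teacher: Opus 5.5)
Your proposal is correct and follows essentially the same route as the paper: Lemma~\ref{lem:interaction-compactness} gives continuity of $G$ and strong convergence of $G(u_j)$ in $X^{-s}(\Omega)$ along weakly convergent sequences. Both properties then transfer to $S$ through the $1$-Lipschitz estimate \eqref{eq:S-basic-estimate-schauder}, together with weak sequential compactness of bounded sets in $X_0^s(\Omega)$.
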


\begin{proof}
Lemma~\ref{lem:interaction-compactness} implies that $G$ is
continuous and that
\[
u_j\rightharpoonup u\quad\text{in }X_0^s(\Omega)
\quad\Longrightarrow\quad
G(u_j)\to G(u)\quad\text{in }X^{-s}(\Omega).
\]
The continuity of $S$ follows from
\eqref{eq:S-basic-estimate-schauder}.
Every bounded sequence in $X_0^s(\Omega)$ has a weakly convergent
subsequence, and the same estimate shows that its image under $S$
converges strongly in $X_0^s(\Omega).$
\end{proof}

For $r>0$, let
\begin{equation}\label{eq:Er-schauder}
E_r
:=
\left\{
u\in X_0^s(\Omega):\|u\|_{X_0^s}\leq r
\right\}.
\end{equation}

\begin{theorem}[Existence in an invariant ball]
\label{thm:existence-general-schauder}
Assume \eqref{eq:general-assumptions}, let $\eta<1$, and suppose
that there exists $r>0$ such that
\begin{equation}\label{eq:invariant-ball-condition}
\gamma r^{2q-2}+\frac{\|f\|_{X^{-s}}}{r}
\leq1-\eta.
\end{equation}
Then problem~\eqref{P} admits a weak solution $u^*\in E_r$.
If $f\neq0$, this solution is nontrivial.
\end{theorem}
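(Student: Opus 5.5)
We apply Schauder's fixed-point theorem to the solution operator $S$ of \eqref{eq:def-S-schauder}, restricted to the closed ball $E_r$ from \eqref{eq:Er-schauder}. The ball $E_r$ is a nonempty, closed, bounded, convex subset of the Hilbert space $X_0^s(\Omega)$. By Lemma~\ref{prop:S-schauder-properties}, $S$ is continuous and maps bounded sets into relatively compact sets. Hence $S(E_r)$ is relatively compact, and the compact-map version of Schauder's theorem applies once we know that $S(E_r)\subset E_r$.

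The key step is this invariance, which comes from the estimates \eqref{eq:S-basic-estimate-schauder} and \eqref{eq:G-estimate} together with \eqref{eq:invariant-ball-condition}. For $u\in E_r$ we have $\|u\|_{X_0^s}\le r$ and $2q-1>1$, so
\[
\|S(u)\|_{X_0^s}\le \|G(u)\|_{X^{-s}}
\le \eta r+\gamma r^{2q-1}+\|f\|_{X^{-s}}
= r\left(\eta+\gamma r^{2q-2}+\frac{\|f\|_{X^{-s}}}{r}\right)\le r .
\]
The last inequality is exactly the hypothesis \eqref{eq:invariant-ball-condition} rewritten as $\eta+\gamma r^{2q-2}+\|f\|_{X^{-s}}/r\le 1$. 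Thus $S(E_r)\subset E_r$, and Schauder's theorem yields a fixed point $u^*\in E_r$. Since fixed points of $S$ are precisely the weak solutions in the sense of Definition~\ref{def:weak-solution}, $u^*$ is a weak solution of problem~\eqref{P}.

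Nontriviality follows as in Theorem~\ref{thm:competitive-global-minimizer}. Because $\mathcal N_Q(0)=0$ and $T_K0=0$, we get $G(0)=f$. If $u^*=0$, then $0=\mathcal E_s(0,\varphi)=\langle f,\varphi\rangle$ for every $\varphi\in X_0^s(\Omega)$, so $f=0$. Hence $u^*\neq0$ whenever $f\neq0$.

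No step is a genuine obstacle. The only point needing care is that Schauder's theorem is used in the form for a continuous map of a closed convex set into itself with relatively compact image. This is exactly what Lemma~\ref{prop:S-schauder-properties} supplies, since $E_r$ is bounded. The hypothesis $\eta<1$ is not used in the invariance argument itself; it only ensures that \eqref{eq:invariant-ball-condition} can be satisfied.
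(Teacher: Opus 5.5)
Your proposal is correct and follows essentially the same route as the paper. The paper also obtains invariance of $E_r$ from \eqref{eq:G-estimate}, \eqref{eq:S-basic-estimate-schauder} and \eqref{eq:invariant-ball-condition}, then applies Schauder's theorem using the continuity and compactness from Lemma~\ref{prop:S-schauder-properties}, and gets nontriviality because $0$ satisfies the weak equation only when $f=0$.
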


\begin{proof}
For $u\in E_r$, estimates \eqref{eq:G-estimate} and
\eqref{eq:S-basic-estimate-schauder} yield
\[
\|S(u)\|_{X_0^s}
\leq\eta r+\gamma r^{2q-1}+\|f\|_{X^{-s}}
\leq r.
\]
Thus $S(E_r)\subseteq E_r$.
Since $E_r$ is nonempty, closed, convex, and bounded, the
compactness and continuity of $S$ allow us to apply Schauder's
fixed-point theorem. The resulting fixed point is a weak solution
by \eqref{eq:def-S-schauder}. Finally, zero can satisfy the weak
equation only when $f=0$.
\end{proof}

An explicit sufficient condition is obtained as follows.
If $\gamma>0$, choose
\begin{equation}\label{eq:explicit-invariant-radius}
r_0
:=
\left(\frac{1-\eta}{2\gamma}\right)^{1/(2q-2)}.
\end{equation}
Then \eqref{eq:invariant-ball-condition} holds whenever
\begin{equation}\label{eq:explicit-forcing-smallness}
\|f\|_{X^{-s}}
\leq\frac{1-\eta}{2}\,r_0.
\end{equation}
Thus, for every fixed $\lambda$, the theorem applies to sufficiently
small forcing terms, provided $\eta<1$. The radius $r_0$ is a
convenient choice rather than an optimization of the sufficient bound.

If $\gamma=0$, the nonlinear term vanishes, and any $r_0>0$ satisfying
\begin{equation}\label{eq:linear-invariant-radius}
\|f\|_{X^{-s}}\leq(1-\eta)r_0
\end{equation}
is admissible. In this case, $S$ is a contraction on the whole space
with constant at most $\eta$, so the weak solution is globally unique
for every $f\in X^{-s}(\Omega)$.

\begin{remark}\label{rem:fixed-point-coercivity}
Although symmetry is not required, the condition $\eta<1$ implies
\begin{equation}\label{eq:general-coercivity}
\mathcal A_s(u,u)
\geq(1-\eta)\|u\|_{X_0^s}^{2}
\qquad\text{for every }u\in X_0^s(\Omega).
\end{equation}
The fixed-point approach therefore accommodates nonsymmetric kernels
and arbitrary signs of $\lambda$, but the present sufficient condition
still ensures coercivity of the linear form.
\end{remark}

\begin{proposition}[Nonnegative solutions]
\label{prop:nonnegative-schauder-solution}
Assume the hypotheses of
Theorem~\ref{thm:existence-general-schauder}, together with
\[
K,Q\geq0\quad\text{a.e. in }\Omega\times\Omega,
\qquad f\geq0\quad\text{in }X^{-s}(\Omega).
\]
If $\lambda\geq0$ and $a\geq0$, there exists a nonnegative weak
solution in $E_r$. If $\lambda<0$, every weak solution is
nonnegative, without any sign restriction on $a$.
In either case, the solution is nontrivial whenever $f\neq0$.
\end{proposition}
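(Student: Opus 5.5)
For the case $\lambda\geq0$, $a\geq0$ we will work with a truncated problem. For $u\in X_0^s(\Omega)$ we define $G^+(u)$ by replacing $u$ with $u^+$ in every term of \eqref{eq:G-definition} except the forcing term:
\[
\langle G^+(u),\varphi\rangle
:=\int_\Omega\bigl(au^++T_Ku^++\lambda\mathcal N_Q(u^+)\bigr)\varphi\,dx+\langle f,\varphi\rangle .
\]
We then let $S^+(u)$ be the Lax--Milgram solution of $\mathcal E_s(S^+(u),\varphi)=\langle G^+(u),\varphi\rangle$.

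Since $u\mapsto u^+$ is continuous on $L^2$ and $L^q$, and the pointwise truncation does not increase the $X_0^s$ norm, the estimates \eqref{eq:G-estimate} hold verbatim with $\|u^+\|_{X_0^s}\le\|u\|_{X_0^s}$. Likewise, weak convergence $u_j\rightharpoonup u$ in $X_0^s$ gives strong convergence $u_j^+\to u^+$ in $L^2$ and $L^q$, so the argument of Lemma~\ref{lem:interaction-compactness} still applies. Hence the proof of Lemma~\ref{prop:S-schauder-properties} carries over, and $S^+(E_r)\subseteq E_r$ under \eqref{eq:invariant-ball-condition}. Schauder's theorem then yields $u\in E_r$ with $u=S^+(u)$.

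We next show that this fixed point is nonnegative. The sign hypotheses give $G^+(u)\geq0$ in $X^{-s}$: indeed $a,K,Q\geq0$ and $\lambda\geq0$, while $\mathcal N_Q(u^+)=(u^+)^{q-1}B_Q(u^+)\ge0$. Testing $\mathcal E_s(u,\varphi)=\langle G^+(u),\varphi\rangle$ with $\varphi=u^-$ and using the scalar inequality already employed in Theorem~\ref{thm:competitive-global-minimizer}, namely $\mathcal E_s(u,-u^-)\geq\mathcal E_s(u^-,u^-)$, we obtain
\[
\mathcal E_s(u^-,u^-)\le -\langle G^+(u),u^-\rangle\le 0 .
\]
Hence $u^-=0$, so $u=u^+$, $G^+(u)=G(u)$, and $u$ is a weak solution of \eqref{P} in $E_r$. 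This step is the main point of the argument: the truncation must leave the a priori bounds and compactness intact, and the weak maximum principle for $\mathcal E_s$ must apply. Both reduce to facts already used in the paper.

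For the case $\lambda<0$ we follow the final paragraph of the proof of Theorem~\ref{thm:competitive-global-minimizer}, but without using symmetry. Let $u$ be any weak solution and set $w=u^-$. Test the weak equation with $-w$ and use $\mathcal E_s(u,-w)\ge\mathcal E_s(w,w)$. Using $K\ge0$, we get $\int (T_Ku)w\ge-\int (T_Kw)w$. The $a$-term contributes $-\int a\,uw$ on one side, and $uw=-w^2$, so it becomes $\int a w^2$ exactly, with no sign condition on $a$ needed. These combine into $\mathcal A_s(u,-w)\ge\mathcal A_s(w,w)$. The nonlinear term gives $\lambda\int\mathcal N_Q(u)(-w)=-|\lambda|\int B_Q(u)w^q\le0$, since $Q\ge0$ gives $B_Q(u)\ge0$. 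Together with $-\langle f,w\rangle\le0$ this yields $\mathcal A_s(w,w)\le0$. The coercivity \eqref{eq:general-coercivity} of Remark~\ref{rem:fixed-point-coercivity}, which follows from $\eta<1$ and needs neither symmetry nor a sign on $a$, then forces $w=0$. Existence in this case comes from Theorem~\ref{thm:existence-general-schauder}.

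Finally, in both cases nontriviality for $f\ne0$ follows as before, since zero satisfies \eqref{eq:weak-formulation} only when $f=0$.
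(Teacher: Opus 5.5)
Your proof is correct. For $\lambda<0$ it is the paper's argument; for $\lambda\ge0$, $a\ge0$ you take a different route.

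The paper keeps the original map $S$ and applies Schauder's theorem on the closed convex cone $E_r^+=\{u\in E_r:u\ge0\}$. It tests $\mathcal E_s(v,\cdot)=\langle G(u),\cdot\rangle$ with $v^-$, where $v=S(u)$, to show $S(E_r^+)\subseteq E_r^+$. You instead apply Schauder's theorem on the whole ball $E_r$ to the truncated map, and use the weak maximum principle only once, at the fixed point.

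The two arguments are closely related. Since $G^+(u)=G(u^+)$, your map is $S^+=S\circ P$ with $Pu=u^+$, and $P$ maps $E_r$ onto $E_r^+$ while fixing $E_r^+$. Both therefore rest on the same sign hypotheses and the same truncation inequality; they differ only in where that inequality is applied.

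The paper's version is shorter. Continuity, compactness and the ball estimate are inherited directly from $S$, so only invariance of the cone needs checking. Yours must re-verify these properties for $S^+$, which you do correctly via $\|u^+\|_{X_0^s}\le\|u\|_{X_0^s}$ and $|u_j^+-u^+|\le|u_j-u|$.

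In exchange, your route makes the nonnegative weak solutions exactly the fixed points of a single compact map defined on all of $X_0^s(\Omega)$, rather than on a cone. That is the form one would want for later degree or continuation arguments on open sets of the whole space.
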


\begin{proof}
Suppose first that $\lambda\geq0$ and $a\geq0$. Consider the
nonempty closed convex set
\[
E_r^+
:=
\{u\in E_r:u\geq0\text{ a.e. in }\Omega\}.
\]
For $u\in E_r^+$, all terms defining $G(u)$ are nonnegative
when paired with nonnegative test functions.
Writing $v=S(u)$ and testing \eqref{eq:def-S-schauder} with $v^-$,
we obtain
\[
0\leq\langle G(u),v^-\rangle
=\mathcal E_s(v,v^-)
\leq-\|v^-\|_{X_0^s}^{2}.
\]
Hence $v\geq0$. The invariant-ball estimate also gives $v\in E_r$,
so $S(E_r^+)\subseteq E_r^+$.
Schauder's theorem on $E_r^+$ yields a nonnegative weak solution.

Now suppose that $\lambda<0$, and let $u$ be any weak solution.
Set $w=u^-$ and test the equation with $-w$.
The fractional truncation inequality gives
$\mathcal E_s(u,-w)\geq\mathcal E_s(w,w)$.
Moreover, since $K\geq0$,
\[
\int_\Omega(T_Ku)w\,dx
=
\int_\Omega(T_Ku^+)w\,dx-\int_\Omega(T_Kw)w\,dx
\geq-\int_\Omega(T_Kw)w\,dx.
\]
Using $uw=-w^2$, we obtain
$\mathcal A_s(u,-w)\geq\mathcal A_s(w,w)$.
Consequently, \eqref{eq:general-coercivity} and the weak equation imply
\[
(1-\eta)\|w\|_{X_0^s}^{2}
+|\lambda|\int_\Omega B_Q(u)w^q\,dx
\leq-\langle f,w\rangle\leq0.
\]
Since $B_Q(u)\geq0$, this forces $w=0$.
The nontriviality assertion follows from
Theorem~\ref{thm:existence-general-schauder}.
\end{proof}

\subsection{Local uniqueness and Picard iteration}
\label{subsec:uniqueness-banach}

For the contraction argument, assume $2\leq q<2_s^*$ and define
\begin{equation}\label{eq:CqQS}
C_{q,Q,S}
:=
\|Q\|_\infty
\left[(q-1)2^{q-2}+q2^{q-1}\right]C_{S,q}^{2q}.
\end{equation}

\begin{lemma}[Local Lipschitz estimate]
\label{lem:NQ-local-Lipschitz}
For every $r>0$ and every $u,v\in E_r$,
\begin{equation}\label{eq:NQ-local-Lipschitz}
\|\mathcal N_Q(u)-\mathcal N_Q(v)\|_{X^{-s}}
\leq C_{q,Q,S}r^{2q-2}\|u-v\|_{X_0^s}.
\end{equation}
\end{lemma}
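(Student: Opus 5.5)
The plan is to split the difference of the interaction as
\[
\mathcal N_Q(u)-\mathcal N_Q(v)
=
\bigl(|u|^{q-2}u-|v|^{q-2}v\bigr)B_Q(u)
+|v|^{q-2}v\bigl(B_Q(u)-B_Q(v)\bigr),
\]
and to estimate each piece in $L^{q'}(\Omega)$. I will then pass to $X^{-s}(\Omega)$ by H\"older's inequality and \eqref{eq:fractional-embedding}, exactly as in \eqref{eq:nonlinear-dual-bound}. For $g\in L^{q'}(\Omega)$ we have $\|g\|_{X^{-s}}\leq C_{S,q}\|g\|_{L^{q'}}$. Both pieces will be bounded by an expression of the form $\mathrm{const}\cdot\|Q\|_\infty\,\bigl(\|u\|_{L^q}+\|v\|_{L^q}\bigr)^{2q-2}\|u-v\|_{L^q}$, and each $L^q$ norm contributes a factor $C_{S,q}\|\cdot\|_{X_0^s}$. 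Counting factors gives $C_{S,q}^{1+(2q-2)+1}=C_{S,q}^{2q}$, which matches \eqref{eq:CqQS}.

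\textbf{First piece.} For $q\geq2$ I use the elementary inequality
\[
\bigl||a|^{q-2}a-|b|^{q-2}b\bigr|\leq (q-1)\bigl(|a|+|b|\bigr)^{q-2}|a-b|,
\]
which follows from the mean value theorem, since the derivative of $t\mapsto|t|^{q-2}t$ is $(q-1)|t|^{q-2}$. I then combine it with \eqref{eq:BQ-estimate}, $\|B_Q(u)\|_\infty\leq\|Q\|_\infty\|u\|_{L^q}^q$. H\"older with exponents $\tfrac{q-1}{q-2}$ and $q-1$ inside $L^{q'}$ gives
\[
\bigl\|(|u|+|v|)^{q-2}|u-v|\bigr\|_{L^{q'}}\leq\bigl\||u|+|v|\bigr\|_{L^q}^{q-2}\|u-v\|_{L^q}.
\]
With $\|u\|_{L^q},\|v\|_{L^q}\leq C_{S,q}r$, we get $\||u|+|v|\|_{L^q}\leq 2C_{S,q}r$. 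This piece is therefore bounded by $(q-1)2^{q-2}\|Q\|_\infty C_{S,q}^{2q-1}r^{2q-2}\|u-v\|_{X_0^s}$ in $L^{q'}$.

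\textbf{Second piece.} I use $\|B_Q(u)-B_Q(v)\|_\infty\leq\|Q\|_\infty\||u|^q-|v|^q\|_{L^1}$ together with
\[
\bigl||a|^q-|b|^q\bigr|\leq q\bigl(|a|+|b|\bigr)^{q-1}|a-b|.
\]
H\"older gives $\||u|^q-|v|^q\|_{L^1}\leq q(2C_{S,q}r)^{q-1}\|u-v\|_{L^q}$. Combined with $\||v|^{q-1}\|_{L^{q'}}=\|v\|_{L^q}^{q-1}\leq (C_{S,q}r)^{q-1}$, this gives a bound $q2^{q-1}\|Q\|_\infty C_{S,q}^{2q-1}r^{2q-2}\|u-v\|_{X_0^s}$.

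Adding the two pieces and multiplying by the final embedding factor $C_{S,q}$ yields \eqref{eq:NQ-local-Lipschitz} with exactly the constant $C_{q,Q,S}$.

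\textbf{Main obstacle.} The only delicate point is getting the powers of $2$ to match \eqref{eq:CqQS}. This requires bounding $\||u|+|v|\|_{L^q}$ by $2C_{S,q}r$ rather than using a convexity estimate that yields other constants. It also requires checking the H\"older exponents when $q=2$: there the factor $(|u|+|v|)^{q-2}$ is identically $1$ and the first inequality is trivial.
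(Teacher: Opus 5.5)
Your proposal is correct and follows the paper's proof essentially line by line: the same splitting of $\mathcal N_Q(u)-\mathcal N_Q(v)$, the same two mean-value inequalities, H\"older's inequality in $L^{q'}$ with $\|u\|_{L^q},\|v\|_{L^q}\leq C_{S,q}r$, and a final duality pairing that supplies the last factor $C_{S,q}$ and gives exactly $C_{q,Q,S}$. Your version only makes explicit the H\"older exponents and the $q=2$ case, which the paper leaves implicit.
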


\begin{proof}
Decompose the difference as
\[
\mathcal N_Q(u)-\mathcal N_Q(v)
=
\bigl(|u|^{q-2}u-|v|^{q-2}v\bigr)B_Q(u)
+|v|^{q-2}v\bigl(B_Q(u)-B_Q(v)\bigr).
\]
For $q\geq2$, the inequalities
\[
\bigl||\xi|^{q-2}\xi-|\zeta|^{q-2}\zeta\bigr|
\leq(q-1)(|\xi|+|\zeta|)^{q-2}|\xi-\zeta|
\]
and
\[
\bigl||\xi|^q-|\zeta|^q\bigr|
\leq q(|\xi|+|\zeta|)^{q-1}|\xi-\zeta|
\]
hold for real $\xi,\zeta$, with the first inequality interpreted
directly as $|\xi-\zeta|\leq|\xi-\zeta|$ when $q=2$.
H\"older's inequality and
$\|u\|_{L^q},\|v\|_{L^q}\leq C_{S,q}r$ give
\begin{align*}
\|\mathcal N_Q(u)-\mathcal N_Q(v)\|_{L^{q'}}
&\leq
\|Q\|_\infty
\left[(q-1)2^{q-2}+q2^{q-1}\right]\\
&\qquad{}\times
(C_{S,q}r)^{2q-2}\|u-v\|_{L^q},
\end{align*}
where $q'=q/(q-1)$. Pairing with a test function in $L^q(\Omega)$
and applying \eqref{eq:fractional-embedding} twice proves
\eqref{eq:NQ-local-Lipschitz}.
\end{proof}

\begin{theorem}[Uniqueness in an invariant ball]
\label{thm:uniqueness-banach}
Assume \eqref{eq:general-assumptions}, let $2\leq q<2_s^*$,
and suppose that, for some $r>0$,
\begin{equation}\label{eq:invariance-and-contraction}
\gamma r^{2q-2}+\frac{\|f\|_{X^{-s}}}{r}\leq1-\eta,
\end{equation}
and
\begin{equation}\label{eq:contraction-condition}
\kappa_r
:=
\eta+|\lambda|C_{q,Q,S}r^{2q-2}<1.
\end{equation}
Then problem~\eqref{P} has a unique weak solution $u^*\in E_r$.
For every $u_0\in E_r$, the Picard iterates
\[
u_{n+1}:=S(u_n),\qquad n\geq0,
\]
remain in $E_r$ and converge strongly to $u^*$ in $X_0^s(\Omega)$.
If $0<\kappa_r<1$, they satisfy
\begin{equation}\label{eq:Picard-error-estimate}
\|u_n-u^*\|_{X_0^s}
\leq
\frac{\kappa_r^n}{1-\kappa_r}
\|u_1-u_0\|_{X_0^s},
\qquad n\geq0.
\end{equation}
If $\kappa_r=0$, the iteration reaches $u^*$ after one step.
\end{theorem}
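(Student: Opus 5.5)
The proof is a direct application of the Banach contraction principle to the solution operator $S$ restricted to the closed ball $E_r$. The plan has three steps.

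\emph{Step 1: invariance.} Condition \eqref{eq:invariance-and-contraction} is exactly \eqref{eq:invariant-ball-condition}. The computation in the proof of Theorem~\ref{thm:existence-general-schauder}, based on \eqref{eq:G-estimate} and \eqref{eq:S-basic-estimate-schauder}, therefore gives $S(E_r)\subseteq E_r$. Consequently, for every $u_0\in E_r$ all Picard iterates stay in $E_r$, by induction.

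\emph{Step 2: the contraction estimate.} For $u,v\in E_r$, the constant $f$ cancels in $G(u)-G(v)$, leaving
\[
\langle G(u)-G(v),\varphi\rangle
=\int_\Omega\bigl(a(u-v)+T_K(u-v)\bigr)\varphi\,dx
+\lambda\int_\Omega\bigl(\mathcal N_Q(u)-\mathcal N_Q(v)\bigr)\varphi\,dx .
\]
I will bound the first integral by $\eta\|u-v\|_{X_0^s}\|\varphi\|_{X_0^s}$, using \eqref{eq:TK-estimate}, the bound on $a$, and \eqref{eq:fractional-embedding} with $r=2$ applied to both $u-v$ and $\varphi$. The second integral is bounded by $|\lambda|C_{q,Q,S}r^{2q-2}\|u-v\|_{X_0^s}\|\varphi\|_{X_0^s}$ via Lemma~\ref{lem:NQ-local-Lipschitz}. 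Combining these with \eqref{eq:S-basic-estimate-schauder} yields
\[
\|S(u)-S(v)\|_{X_0^s}\leq\kappa_r\|u-v\|_{X_0^s}.
\]
Since $\kappa_r<1$, $S$ is a contraction on $E_r$.

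\emph{Step 3: conclusion.} The ball $E_r$ is a closed subset of the Hilbert space $X_0^s(\Omega)$, hence complete. Banach's fixed-point theorem then gives a unique fixed point $u^*\in E_r$. Fixed points of $S$ are exactly the weak solutions, so this is the unique weak solution in $E_r$. The theorem also gives $u_n\to u^*$ strongly for every $u_0\in E_r$.

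For \eqref{eq:Picard-error-estimate}, I will use the standard telescoping bound $\|u_{n+1}-u_n\|\leq\kappa_r^n\|u_1-u_0\|$. Summing the geometric series gives $\|u_m-u_n\|\leq\kappa_r^n(1-\kappa_r)^{-1}\|u_1-u_0\|$ for $m>n$, and letting $m\to\infty$ proves the estimate.

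If $\kappa_r=0$, then $S$ is constant on $E_r$. Indeed, $\|S(u)-S(v)\|\leq0$ for all $u,v\in E_r$, so $S\equiv u^*$ on $E_r$ and $u_1=S(u_0)=u^*$.

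There is no real obstacle here. The only step needing care is Step 2: keeping the linear part's constant exactly $\eta$ by applying the $L^2$ embedding twice, and noting that Lemma~\ref{lem:NQ-local-Lipschitz} requires both $u$ and $v$ to lie in $E_r$, which is why invariance must be established first.
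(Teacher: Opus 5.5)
Your proposal is correct and follows the paper's proof step for step. You get invariance of $E_r$ from the invariant-ball estimate, and then the contraction bound $\|S(u)-S(v)\|_{X_0^s}\leq\eta\|u-v\|_{X_0^s}+|\lambda|\,\|\mathcal N_Q(u)-\mathcal N_Q(v)\|_{X^{-s}}\leq\kappa_r\|u-v\|_{X_0^s}$ via Lemma~\ref{lem:NQ-local-Lipschitz}; Banach's theorem on the complete set $E_r$, the geometric-series error bound, and the observation that $S$ is constant on $E_r$ when $\kappa_r=0$ finish the argument exactly as in the paper.
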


\begin{proof}
Condition~\eqref{eq:invariance-and-contraction} ensures
$S(E_r)\subseteq E_r$. For $u,v\in E_r$,
\eqref{eq:S-basic-estimate-schauder} and
Lemma~\ref{lem:NQ-local-Lipschitz} give
\begin{align*}
\|S(u)-S(v)\|_{X_0^s}
&\leq
\eta\|u-v\|_{X_0^s}
+|\lambda|
\|\mathcal N_Q(u)-\mathcal N_Q(v)\|_{X^{-s}}\\
&\leq\kappa_r\|u-v\|_{X_0^s}.
\end{align*}
Thus $S$ is a contraction on the complete metric space $E_r$.
The Banach fixed-point theorem yields the unique fixed point and
convergence of the iterates.
For $0<\kappa_r<1$, summing
\[
\|u_{n+1}-u_n\|_{X_0^s}
\leq\kappa_r^n\|u_1-u_0\|_{X_0^s}
\]
gives \eqref{eq:Picard-error-estimate}.
When $\kappa_r=0$, the map $S$ is constant on $E_r$, and its
value is the fixed point.
\end{proof}

\begin{remark}\label{rem:uniqueness-in-ball}
The uniqueness assertion concerns solutions in $E_r$.
It becomes global only if an independent a priori estimate places
every weak solution in this ball.
The contraction argument is stated for $q\geq2$; the Schauder
existence result remains valid throughout $1<q<2_s^*$.
\end{remark}
\section{The local limit as \texorpdfstring{$s\uparrow1$}{s approaches 1}}
\label{sec:local-limit}

We study the solutions constructed in Section~\ref{sec:fixed-point}
as the fractional order approaches one. Uniform estimates yield
subsequential convergence to a solution of the corresponding local
Dirichlet problem. Testing the equations with the solutions also
gives convergence of the fractional energies to the Dirichlet energy.

Throughout this section, $\Omega\subset\mathbb R^N$, $N\geq2$, is a
bounded Lipschitz domain. The data
\begin{equation}\label{eq:data-limit-section}
\lambda\in\mathbb R,\qquad
a\in L^\infty(\Omega),\qquad
K,Q\in L^\infty(\Omega\times\Omega),\qquad
f\in L^2(\Omega)
\end{equation}
are fixed independently of $s$. We assume $1<q<2^*$, where
\[
2^*
:=
\begin{cases}
\dfrac{2N}{N-2},&N>2,\\[2mm]
+\infty,&N=2.
\end{cases}
\]
Choose $s_0\in(0,1)$ sufficiently close to one that
\begin{equation}\label{eq:q-below-fractional-critical}
q<2_{s_0}^*.
\end{equation}
For $s\in[s_0,1)$, consider
\begin{equation}\label{eq:s-dependent-problem}
\begin{cases}
(-\Delta)^s u_s
=au_s+T_Ku_s+\lambda\mathcal N_Q(u_s)+f
&\text{in }\Omega,\\
u_s=0&\text{in }\mathbb R^N\setminus\Omega.
\end{cases}
\end{equation}
We use the normalized form $\mathcal E_s$ from
\eqref{eq:fractional-bilinear-form} and write
$\|w\|_s:=\mathcal E_s(w,w)^{1/2}$.

\subsection{Uniform estimates and compactness}
\label{subsec:uniform-invariant-balls}

For every fixed $\sigma\in(0,1)$ and $1\leq r<2_\sigma^*$,
there exists $C_{\sigma,r}>0$, independent of
$s\in[\sigma,1)$, such that
\[
\|w\|_{L^r(\Omega)}
\leq C_{\sigma,r}\|w\|_s,
\qquad w\in X_0^s(\Omega).
\]
These uniform estimates follow from the normalized fractional
Poincar\'e inequality and the Sobolev embedding at a fixed order.
Indeed, using the unitary Fourier transform of the zero extension,
\[
\mathcal E_s(w,w)
=
\int_{\mathbb R^N}|\xi|^{2s}|\widehat w(\xi)|^2\,d\xi,
\]
and $|\xi|^{2\sigma}\leq1+|\xi|^{2s}$ for $s\geq\sigma$.
Thus the uniform $L^2$ estimate controls the $H^\sigma(\mathbb R^N)$
norm by $\|w\|_s$, and the fixed-order Sobolev embedding applies.
We refer to
\cite{DiNezzaPalatucciValdinoci2012,BrascoPariniSquassina2016}
for the underlying fractional Sobolev and local-limit results.

In particular, there are constants $C_2,C_q>0$, independent of
$s\in[s_0,1)$, such that
\begin{equation}\label{eq:uniform-embeddings}
\|w\|_{L^2(\Omega)}\leq C_2\|w\|_s,
\qquad
\|w\|_{L^q(\Omega)}\leq C_q\|w\|_s.
\end{equation}
The same constants can be used in the local inequalities
\begin{equation}\label{eq:local-embedding-constants}
\|w\|_{L^2(\Omega)}\leq C_2\|\nabla w\|_{L^2(\Omega)},
\qquad
\|w\|_{L^q(\Omega)}\leq C_q\|\nabla w\|_{L^2(\Omega)},
\quad w\in H_0^1(\Omega).
\end{equation}
To see this, first take $w\in C_c^\infty(\Omega)$, let $s\uparrow1$
in \eqref{eq:uniform-embeddings}, and then use density in
$H_0^1(\Omega)$.

Set
\begin{equation}\label{eq:uniform-constants}
A:=C_2^2\bigl(\|a\|_\infty+|\Omega|\|K\|_\infty\bigr),
\qquad
B:=|\lambda|\|Q\|_\infty C_q^{2q},
\qquad
D:=C_2\|f\|_{L^2(\Omega)}.
\end{equation}

\begin{proposition}[Uniform existence bound]
\label{prop:uniform-invariant-ball}
Assume that $A<1$ and that, for some $r_0>0$,
\begin{equation}\label{eq:uniform-invariant-condition}
Br_0^{2q-2}+\frac{D}{r_0}\leq1-A.
\end{equation}
Then, for every $s\in[s_0,1)$,
problem~\eqref{eq:s-dependent-problem} has a weak solution
$u_s\in X_0^s(\Omega)$ satisfying
\begin{equation}\label{eq:uniform-energy-bound}
\mathcal E_s(u_s,u_s)\leq r_0^2.
\end{equation}
\end{proposition}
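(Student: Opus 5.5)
The plan is to apply Theorem~\ref{thm:existence-general-schauder} for each fixed $s\in[s_0,1)$, using the constants $C_{S,2}=C_2$ and $C_{S,q}=C_q$ from \eqref{eq:uniform-embeddings}. This substitution is legitimate because the proofs of Section~\ref{sec:fixed-point} only use $C_{S,r}$ as some constant for which \eqref{eq:fractional-embedding} holds with $r\in\{2,q\}$. With these choices, the quantities $\eta$ and $\gamma$ of \eqref{eq:eta-gamma-definition} become exactly $A$ and $B$, independently of $s$. By \eqref{eq:q-below-fractional-critical} and $s\geq s_0$ we have $q<2_{s_0}^*\leq2_s^*$, so the standing assumption \eqref{eq:general-assumptions} holds for every such $s$.

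The forcing term also needs handling. Since $f\in L^2(\Omega)$, it defines an element of $X^{-s}(\Omega)$ through $\langle f,\varphi\rangle=\int_\Omega f\varphi\,dx$. By Cauchy--Schwarz and \eqref{eq:uniform-embeddings},
\[
|\langle f,\varphi\rangle|\leq\|f\|_{L^2}\|\varphi\|_{L^2}\leq C_2\|f\|_{L^2}\|\varphi\|_s,
\]
so $\|f\|_{X^{-s}}\leq D$ for every $s\in[s_0,1)$. Hence \eqref{eq:uniform-invariant-condition} implies
\[
\gamma r_0^{2q-2}+\frac{\|f\|_{X^{-s}}}{r_0}\leq Br_0^{2q-2}+\frac{D}{r_0}\leq1-A=1-\eta,
\]
which is precisely \eqref{eq:invariant-ball-condition} with $r=r_0$, and we also have $\eta=A<1$.

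Theorem~\ref{thm:existence-general-schauder} then yields a weak solution $u_s\in E_{r_0}$ of \eqref{eq:s-dependent-problem}. Membership in $E_{r_0}$ means $\|u_s\|_s\leq r_0$, which is \eqref{eq:uniform-energy-bound}. Note that \eqref{eq:s-dependent-problem} is exactly problem \eqref{P} with the same sign conventions, so Definition~\ref{def:weak-solution} applies.

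The only delicate point is making sure the invariant-ball estimate \eqref{eq:G-estimate} really holds with the uniform constants, rather than with some unspecified $s$-dependent $C_{S,r}$. To settle this, I would recheck \eqref{eq:TK-estimate}, \eqref{eq:nonlinear-dual-bound} and the $a$-term, confirming that each uses only the embedding inequality for $r=2$ and $r=q$. They do, so the uniform constants can be inserted directly and no further work is needed.
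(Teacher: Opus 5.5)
Your proposal is correct and follows the paper's proof. It bounds $\|f\|_{X^{-s}}\le D$ through the uniform $L^2$ embedding, controls $\eta,\gamma$ by $A,B$, and applies Theorem~\ref{thm:existence-general-schauder} with $r=r_0$ for each $s\in[s_0,1)$. The paper only says that $\eta$ and $\gamma$ can be bounded by $A$ and $B$; your choice $C_{S,2}=C_2$, $C_{S,q}=C_q$ gives equality, which is an equally valid cosmetic variant.
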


\begin{proof}
By \eqref{eq:uniform-embeddings},
$\|f\|_{X^{-s}}\leq D$, while the constants $\eta$ and $\gamma$
in Section~\ref{sec:fixed-point} can be bounded by $A$ and $B$,
respectively. Thus \eqref{eq:uniform-invariant-condition} verifies
the invariant-ball condition of
Theorem~\ref{thm:existence-general-schauder} for every
$s\in[s_0,1)$, with the same radius $r_0$.
\end{proof}

For $B>0$, an explicit sufficient choice is
\begin{equation}\label{eq:explicit-uniform-radius}
r_0
=
\left(\frac{1-A}{2B}\right)^{1/(2q-2)},
\qquad
D\leq\frac{1-A}{2}\,r_0.
\end{equation}
If $B=0$, any $r_0>0$ satisfying $D\leq(1-A)r_0$ is admissible.
These conditions are uniform in $s$, although the explicit radius
is not intended to optimize the sufficient forcing bound.

\begin{lemma}[Compactness as the order approaches one]
\label{prop:compactness-us}
Let $s_n\uparrow1$ and let $w_n\in X_0^{s_n}(\Omega)$ satisfy
$\sup_n\|w_n\|_{s_n}<\infty$.
Then, after passing to a subsequence, there exists
$w\in H_0^1(\Omega)$ such that
\begin{equation}\label{eq:strong-Lp-all}
w_n\to w
\quad\text{strongly in }L^p(\Omega)
\quad\text{for every }1\leq p<2^*,
\end{equation}
and
\begin{equation}\label{eq:BBM-liminf}
\int_\Omega|\nabla w|^2\,dx
\leq
\liminf_{n\to\infty}\mathcal E_{s_n}(w_n,w_n).
\end{equation}
The same conclusion holds for any sequence $s_n\to1^-$,
without a monotonicity assumption.
\end{lemma}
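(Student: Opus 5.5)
Since the normalization of $\mathcal E_s$ makes the energy converge to the Dirichlet energy, the plan is to use the Bourgain--Brezis--Mironescu compactness theory in the form developed for zero-exterior-data spaces, as in \cite{BrascoPariniSquassina2016}, combined with the uniform bounds \eqref{eq:uniform-embeddings}. Set $M:=\sup_n\|w_n\|_{s_n}$ and extend each $w_n$ by zero outside $\Omega$. By \eqref{eq:uniform-embeddings}, $\|w_n\|_{L^2(\Omega)}\le C_2M$. Working in Fourier variables, I will write $\mathcal E_{s_n}(w_n,w_n)=\int_{\mathbb R^N}|\xi|^{2s_n}|\widehat w_n|^2\,d\xi$. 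Then, for any fixed $\sigma\in[s_0,1)$ and $s_n\ge\sigma$, the inequality $|\xi|^{2\sigma}\le1+|\xi|^{2s_n}$ shows that $(w_n)$ is bounded in $H^\sigma(\mathbb R^N)$, with all supports contained in $\overline\Omega$. Since $\Omega$ is bounded, Rellich's theorem at the fixed order $\sigma$ gives a subsequence converging strongly in $L^2(\Omega)$ to some $w$, with $w=0$ outside $\Omega$.

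Next I will upgrade this to all $p<2^*$. For any such $p$, I choose $\sigma<1$ close enough to one that $p<2^*_\sigma$. Boundedness in $H^\sigma$ and the embedding $H^\sigma\hookrightarrow L^{2^*_\sigma}$ then bound $(w_n)$ in $L^{2^*_\sigma}(\Omega)$. Interpolating between the strong $L^2$ convergence and this bound gives strong convergence in $L^p$; for $p<2$ it follows directly from boundedness of $\Omega$. A diagonal argument is not needed, because the $L^2$ limit is already fixed and every $L^p$ convergence follows from it.

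The main step is \eqref{eq:BBM-liminf}, together with $w\in H^1_0(\Omega)$. My plan is to argue in Fourier variables. Fix $R>0$. On the ball $\{|\xi|\le R\}$, the functions $|\xi|^{2s_n}$ converge uniformly to $|\xi|^2$. Moreover, $\widehat w_n\to\widehat w$ in $L^2$ by Plancherel, since $w_n\to w$ in $L^2(\mathbb R^N)$. Hence
\[
\int_{|\xi|\le R}|\xi|^2|\widehat w|^2\,d\xi=\lim_{n\to\infty}\int_{|\xi|\le R}|\xi|^{2s_n}|\widehat w_n|^2\,d\xi\le\liminf_{n\to\infty}\mathcal E_{s_n}(w_n,w_n).
\]
Letting $R\to\infty$ yields $\int|\xi|^2|\widehat w|^2\,d\xi\le\liminf_n\mathcal E_{s_n}(w_n,w_n)\le M^2$. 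Thus $w\in H^1(\mathbb R^N)$, $\|\nabla w\|_{L^2(\mathbb R^N)}^2$ is bounded by the same $\liminf$, and $w$ vanishes outside $\Omega$. Because $\Omega$ is a Lipschitz domain, an $H^1(\mathbb R^N)$ function vanishing a.e. outside $\Omega$ belongs to $H^1_0(\Omega)$. This is the one place where the boundary regularity enters, and I expect it to be the main point requiring a citation. Since $\nabla w=0$ a.e. outside $\Omega$, this proves \eqref{eq:BBM-liminf}.

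Finally, the argument never used monotonicity of $s_n$; it only needs $s_n\ge\sigma$ eventually for each fixed $\sigma<1$. That holds for any sequence $s_n\to1^-$, which gives the last sentence of the statement. I also note that the Fourier identity for $\mathcal E_s$ with constant $C_{N,s}$ is exactly the normalization recorded after \eqref{eq:uniform-embeddings}, so no additional constants appear.
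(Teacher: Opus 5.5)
Your proof is correct, but at the central step it takes a different route from the paper's. The paper obtains three things in one stroke by citing the normalized Bourgain--Brezis--Mironescu compactness theorem for zero extensions: the strong $L^2$ convergence, the membership $w\in H_0^1(\Omega)$ (using the Lipschitz boundary), and the liminf inequality \eqref{eq:BBM-liminf}. It uses the uniform fixed-order embedding only afterwards, for the interpolation to $2<p<2^*$, and that step is the same as yours.

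You instead derive everything from the Fourier representation the paper had already set up. Compactness in $L^2$ comes from Rellich at a fixed order $\sigma<1$, using the uniform $H^\sigma(\mathbb R^N)$ bound given by $|\xi|^{2\sigma}\le 1+|\xi|^{2s_n}$. The liminf inequality comes from three ingredients:
\begin{itemize}
\item uniform convergence $|\xi|^{2s_n}\to|\xi|^2$ on balls;
\item Plancherel;
\item monotone convergence in $R$.
\end{itemize}
Together these also show directly that $w\in H^1(\mathbb R^N)$.

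Your version is self-contained and makes it evident why monotonicity of $s_n$ plays no role. The paper's citation is shorter, and it rests on a theorem that does not use Fourier analysis. It would therefore still apply to Gagliardo seminorms restricted to $\Omega\times\Omega$, or to $p\neq 2$, where your symbol argument is unavailable. For the present form $\mathcal E_s$, a full seminorm on $\mathbb R^{2N}$ of zero extensions, your argument is the more elementary one. Your opening sentence announces the Bourgain--Brezis--Mironescu theory, but the proof you actually write never needs it.
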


\begin{proof}
The uniform $L^2$ estimate and the normalized
Bourgain--Brezis--Mironescu compactness theorem for zero extensions
give, up to a subsequence,
\[
w_n\to w\quad\text{in }L^2(\Omega),
\qquad w\in H_0^1(\Omega),
\]
together with \eqref{eq:BBM-liminf}; see
\cite{BourgainBrezisMironescu2001,BrascoPariniSquassina2016}.
The zero exterior condition passes to the limit in $L^2(\mathbb R^N)$;
the Lipschitz regularity of $\Omega$ identifies the restriction of
this $H^1(\mathbb R^N)$ limit with an element of $H_0^1(\Omega)$.

For $2<p<2^*$, choose $p<\rho<2^*$ and then
$\sigma<1$ sufficiently close to one that $\rho<2_\sigma^*$.
For all sufficiently large $n$, $s_n\geq\sigma$, and the uniform
embedding gives a bounded sequence in $L^\rho(\Omega)$.
Interpolation with the strong $L^2$ convergence yields strong
$L^p$ convergence. For $1\leq p<2$, the conclusion follows from
the boundedness of $\Omega$. This argument uses the same
$L^2$-convergent subsequence for every $p<2^*$.
\end{proof}

\subsection{Convergence to the local problem}
\label{subsec:passage-to-limit}

The limiting equation is
\begin{equation}\label{eq:limiting-local-problem}
\begin{cases}
-\Delta u=au+T_Ku+\lambda\mathcal N_Q(u)+f
&\text{in }\Omega,\\
u=0&\text{on }\partial\Omega,
\end{cases}
\end{equation}
understood weakly in $H_0^1(\Omega)$.

\begin{theorem}[Convergence of solutions and energies]
\label{thm:main-local-limit}
Assume \eqref{eq:data-limit-section},
\eqref{eq:q-below-fractional-critical}, $A<1$, and
\eqref{eq:uniform-invariant-condition}.
For each $s\in[s_0,1)$, choose a weak solution $u_s$ satisfying
\eqref{eq:uniform-energy-bound}.
Then, for every sequence $s_n\to1^-$, there exist a subsequence
and a weak solution $u\in H_0^1(\Omega)$ of
\eqref{eq:limiting-local-problem} such that
\[
u_{s_n}\to u
\quad\text{strongly in }L^p(\Omega)
\quad\text{for every }1\leq p<2^*.
\]
Moreover,
\begin{equation}\label{eq:local-limit-energy-convergence}
\mathcal E_{s_n}(u_{s_n},u_{s_n})
\longrightarrow
\int_\Omega|\nabla u|^2\,dx
\leq r_0^2.
\end{equation}
If $f\neq0$, the limit $u$ is nontrivial.
If the selected solutions $u_s$ are nonnegative, then $u\geq0$.
\end{theorem}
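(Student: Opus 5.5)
By \eqref{eq:uniform-energy-bound}, the family satisfies $\sup_n\|u_{s_n}\|_{s_n}\le r_0$. Lemma~\ref{prop:compactness-us} therefore gives a subsequence and some $u\in H_0^1(\Omega)$ such that $u_{s_n}\to u$ in every $L^p(\Omega)$ with $p<2^*$. It also gives the liminf inequality
\[
\int_\Omega|\nabla u|^2\,dx\le\liminf_{n\to\infty}\mathcal E_{s_n}(u_{s_n},u_{s_n})\le r_0^2 .
\]

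\textbf{Passing to the limit in the equation.} Fix $\varphi\in C_c^\infty(\Omega)$. The right-hand side of the weak formulation converges as follows:
\begin{itemize}
\item $au_{s_n}\to au$ and $T_Ku_{s_n}\to T_Ku$ in $L^2(\Omega)$;
\item $\mathcal N_Q(u_{s_n})\to\mathcal N_Q(u)$ in $L^{q'}(\Omega)$, by the first part of Lemma~\ref{lem:interaction-compactness} together with $L^q$ convergence;
\item $\langle f,\varphi\rangle=\int f\varphi$ does not depend on $n$.
\end{itemize}
For the left-hand side, I symmetrize the double integral, which gives $\mathcal E_s(u_s,\varphi)=\int u_s(-\Delta)^s\varphi\,dx$ with $u_s=0$ outside $\Omega$. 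For $\varphi\in C_c^\infty$ one has $(-\Delta)^s\varphi\to-\Delta\varphi$ in $L^2(\mathbb R^N)$ as $s\uparrow1$. This is the standard normalized-constant fact; it can be checked via Fourier transform, since $|\xi|^{2s}\hat\varphi\to|\xi|^2\hat\varphi$ in $L^2$ by dominated convergence. Combined with $L^2$ convergence of $u_{s_n}$, this gives $\mathcal E_{s_n}(u_{s_n},\varphi)\to\int u(-\Delta\varphi)=\int\nabla u\cdot\nabla\varphi$. Hence $u$ satisfies the weak form of \eqref{eq:limiting-local-problem} for smooth test functions. Density of $C_c^\infty(\Omega)$ in $H_0^1(\Omega)$ then extends this to all test functions, because every term is continuous in $\varphi$ in the $H_0^1$ norm: $\mathcal N_Q(u)\in L^{q'}$ and $q<2^*$.

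\textbf{Energy convergence.} This step is the main obstacle, since the liminf inequality only gives one direction. I would test each fractional equation with $u_{s_n}$ itself:
\[
\mathcal E_{s_n}(u_{s_n},u_{s_n})=\int_\Omega\bigl(au_{s_n}+T_Ku_{s_n}+\lambda\mathcal N_Q(u_{s_n})+f\bigr)u_{s_n}\,dx .
\]
By the strong convergences (in $L^2$, and in the $L^{q'}$–$L^q$ pairing), the right-hand side tends to the corresponding expression in $u$. Testing the local equation with $u\in H_0^1$ shows that this limit equals $\int|\nabla u|^2$. So the whole sequence of energies converges to $\int|\nabla u|^2$, and the bound $\le r_0^2$ follows.

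\textbf{Remaining assertions.} If $u=0$ and $f\neq0$, the local equation would force $\int f\varphi=0$ for all $\varphi$, a contradiction; hence $u$ is nontrivial. Nonnegativity survives because $L^2$ limits of nonnegative functions have a subsequence converging a.e., so $u\ge0$.
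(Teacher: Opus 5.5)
Your proposal is correct and follows essentially the same route as the paper. Both arguments extract a subsequence via the compactness lemma, use $\mathcal E_{s_n}(u_{s_n},\varphi)=\int_{\mathbb R^N}u_{s_n}(-\Delta)^{s_n}\varphi\,dx$ together with the Fourier convergence $(-\Delta)^{s_n}\varphi\to-\Delta\varphi$ in $L^2$, extend by density to $H_0^1(\Omega)$, and obtain energy convergence by testing the fractional and local equations with the solutions themselves. The differences are cosmetic: you pair $\mathcal N_Q(u_{s_n})$ with $u_{s_n}$ where the paper writes that term as $\lambda\mathcal Q(u_{s_n})$, and you spell out the dominated-convergence and a.e.-subsequence details that the paper leaves implicit.
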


\begin{proof}
Write $u_n:=u_{s_n}$.
Lemma~\ref{prop:compactness-us} provides a subsequence and
$u\in H_0^1(\Omega)$ with the asserted strong convergence and
$\|\nabla u\|_{L^2}\leq r_0$.

We first pass to the limit in the weak equation.
For $\varphi\in C_c^\infty(\Omega)$, its zero extension is smooth
and compactly supported in $\mathbb R^N$. The Fourier representation
of the normalized fractional Laplacian gives
\[
(-\Delta)^{s_n}\varphi\to-\Delta\varphi
\quad\text{in }L^2(\mathbb R^N).
\]
Using strong $L^2$ convergence of the zero extensions, we obtain
\[
\mathcal E_{s_n}(u_n,\varphi)
=
\int_{\mathbb R^N}u_n(-\Delta)^{s_n}\varphi\,dx
\longrightarrow
\int_\Omega\nabla u\cdot\nabla\varphi\,dx.
\]
The linear terms converge by strong $L^2$ convergence and
\eqref{eq:TK-estimate}. Since $q<2^*$, we also have
$u_n\to u$ in $L^q(\Omega)$. The $L^q$ continuity established in
Lemma~\ref{lem:interaction-compactness} therefore gives
\[
B_Q(u_n)\to B_Q(u)\quad\text{in }L^\infty(\Omega),
\qquad
\mathcal N_Q(u_n)\to\mathcal N_Q(u)
\quad\text{in }L^{q'}(\Omega),
\]
where $q'=q/(q-1)$.
Passing to the limit yields
\begin{equation}\label{eq:limiting-weak-equation}
\int_\Omega\nabla u\cdot\nabla\varphi\,dx
=
\int_\Omega\bigl(au+T_Ku+\lambda\mathcal N_Q(u)+f\bigr)
\varphi\,dx.
\end{equation}
Both sides are continuous on $H_0^1(\Omega)$, by the Sobolev
embedding and $q<2^*$. Density extends this identity to every
$\varphi\in H_0^1(\Omega)$.

To prove energy convergence, retain the notation
\[
\mathcal Q(v)
=
\iint_{\Omega\times\Omega}
Q(x,y)|v(x)|^q|v(y)|^q\,dx\,dy.
\]
This expression is well defined for the present kernels,
without symmetry or sign assumptions.
Testing the fractional equation with $u_n$ gives
\begin{equation}\label{eq:fractional-solution-energy-identity}
\mathcal E_{s_n}(u_n,u_n)
=
\int_\Omega au_n^2\,dx
+\int_\Omega(T_Ku_n)u_n\,dx
+\lambda\mathcal Q(u_n)
+\int_\Omega fu_n\,dx.
\end{equation}
The linear terms converge by strong $L^2$ convergence.
Furthermore, strong $L^q$ convergence gives
$|u_n|^q\to|u|^q$ in $L^1(\Omega)$, and
\[
|\mathcal Q(u_n)-\mathcal Q(u)|
\leq
\|Q\|_\infty
\left(\|u_n\|_{L^q}^{q}+\|u\|_{L^q}^{q}\right)
\bigl\||u_n|^q-|u|^q\bigr\|_{L^1}
\longrightarrow0.
\]
Thus the right-hand side of
\eqref{eq:fractional-solution-energy-identity} converges to
\[
\int_\Omega au^2\,dx
+\int_\Omega(T_Ku)u\,dx
+\lambda\mathcal Q(u)+\int_\Omega fu\,dx.
\]
Testing \eqref{eq:limiting-weak-equation} with $u$ identifies this
quantity with $\int_\Omega|\nabla u|^2\,dx$, proving
\eqref{eq:local-limit-energy-convergence}.

If $u=0$, the limiting weak equation implies $f=0$, so the
limit is nontrivial when $f\neq0$.
Finally, nonnegativity is preserved under strong $L^2$ convergence.
\end{proof}

\begin{corollary}[Convergence of the entire family]
\label{cor:whole-family-convergence}
Under the assumptions of Theorem~\ref{thm:main-local-limit},
suppose that the local problem has a unique weak solution in
\[
E_1(r_0)
:=
\left\{
w\in H_0^1(\Omega):
\|\nabla w\|_{L^2(\Omega)}\leq r_0
\right\}.
\]
Then the entire selected family satisfies
\[
u_s\to u
\quad\text{strongly in }L^p(\Omega)
\quad\text{for every }1\leq p<2^*,
\]
and
\[
\mathcal E_s(u_s,u_s)
\longrightarrow\int_\Omega|\nabla u|^2\,dx
\qquad\text{as }s\uparrow1.
\]

In particular, this conclusion holds if $q\geq2$ and
\begin{equation}\label{eq:uniform-contraction-local-limit}
A+|\lambda|\widehat C_{q,Q}r_0^{2q-2}<1,
\qquad
\widehat C_{q,Q}
:=
\|Q\|_\infty
\left[(q-1)2^{q-2}+q2^{q-1}\right]C_q^{2q}.
\end{equation}
\end{corollary}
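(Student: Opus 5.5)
The first assertion follows from the standard subsequence principle applied to Theorem~\ref{thm:main-local-limit}. Suppose the whole family does not converge to $u$. Then there exist $p\in[1,2^*)$, $\varepsilon>0$ and a sequence $s_n\to1^-$ such that
\[
\|u_{s_n}-u\|_{L^p(\Omega)}\geq\varepsilon
\quad\text{or}\quad
\Bigl|\mathcal E_{s_n}(u_{s_n},u_{s_n})-\int_\Omega|\nabla u|^2\,dx\Bigr|\geq\varepsilon
\qquad\text{for all }n.
\]
Theorem~\ref{thm:main-local-limit} yields a further subsequence and a weak solution $\tilde u$ of \eqref{eq:limiting-local-problem} with $\|\nabla\tilde u\|_{L^2}\leq r_0$, so $\tilde u\in E_1(r_0)$. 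Along that subsequence, $u_{s_n}\to\tilde u$ in every $L^p$, $p<2^*$, and the energies converge to $\int_\Omega|\nabla\tilde u|^2\,dx$. By the assumed uniqueness in $E_1(r_0)$, we have $\tilde u=u$, where $u$ denotes the unique solution there; at least one subsequential limit exists, so this set is nonempty. This contradicts the choice of the sequence.

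For the sufficient condition, I would run the contraction argument of Theorem~\ref{thm:uniqueness-banach} directly in the local setting. Define $S_1:H_0^1(\Omega)\to H_0^1(\Omega)$ by Lax--Milgram for the Dirichlet form $\int_\Omega\nabla\cdot\nabla$, with right-hand side $G(u)$. Using the local inequalities \eqref{eq:local-embedding-constants} with the same constants $C_2,C_q$, estimate \eqref{eq:G-estimate} holds with $\eta,\gamma$ replaced by $A,B$. Also $\|f\|_{H^{-1}}\leq C_2\|f\|_{L^2}=D$. Hence \eqref{eq:uniform-invariant-condition} gives $S_1(E_1(r_0))\subseteq E_1(r_0)$.

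The proof of Lemma~\ref{lem:NQ-local-Lipschitz} only uses pointwise inequalities, H\"older, and the embedding constant, so it carries over verbatim with $C_{S,q}$ replaced by $C_q$. It gives the Lipschitz constant $\widehat C_{q,Q}r_0^{2q-2}$ on $E_1(r_0)$. Therefore
\[
\|S_1u-S_1v\|_{H_0^1}\leq\bigl(A+|\lambda|\widehat C_{q,Q}r_0^{2q-2}\bigr)\|u-v\|_{H_0^1},
\]
which is a strict contraction by \eqref{eq:uniform-contraction-local-limit}. Every weak solution in $E_1(r_0)$ is a fixed point of $S_1$, so the Banach fixed-point theorem gives uniqueness there.

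The only delicate point is to check that the transfer of constants is legitimate, namely that the local embeddings hold with exactly $C_2$ and $C_q$. This is precisely \eqref{eq:local-embedding-constants}, established earlier by letting $s\uparrow1$. Everything else is routine.
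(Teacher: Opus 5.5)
Your proposal is correct and follows the paper's proof essentially step for step. The first part is the subsequence argument combined with uniqueness in $E_1(r_0)$. The sufficient condition uses the local solution operator with the constants $C_2,C_q$ transferred via \eqref{eq:local-embedding-constants}, invariance of $E_1(r_0)$ from \eqref{eq:uniform-invariant-condition}, and the Lipschitz bound carried over from Lemma~\ref{lem:NQ-local-Lipschitz}. The only cosmetic difference is that you fold energy convergence into the contradiction argument. The paper instead deduces it afterwards from the identity \eqref{eq:fractional-solution-energy-identity}, once $L^2$ and $L^q$ convergence of the whole family is known.
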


\begin{proof}
Every sequence $s_n\to1^-$ has a subsequence converging to a
local weak solution in $E_1(r_0)$.
Uniqueness in this ball identifies every such limit with $u$.
The usual subsequence contradiction argument then proves
convergence of the entire family in each $L^p(\Omega)$.
The energy identity
\eqref{eq:fractional-solution-energy-identity} gives convergence
of the entire family of energies as well.

For the last assertion, define the local solution operator by
solving
\[
-\Delta v=aw+T_Kw+\lambda\mathcal N_Q(w)+f,
\qquad v\in H_0^1(\Omega).
\]
The local estimates \eqref{eq:local-embedding-constants} and
\eqref{eq:uniform-invariant-condition} show that this operator
maps $E_1(r_0)$ into itself.
The proof of Lemma~\ref{lem:NQ-local-Lipschitz}, using $C_q$
in place of $C_{S,q}$, bounds its Lipschitz constant by
\[
A+|\lambda|\widehat C_{q,Q}r_0^{2q-2}.
\]
Condition~\eqref{eq:uniform-contraction-local-limit} therefore
gives uniqueness in $E_1(r_0)$ by the Banach fixed-point theorem.
\end{proof}

For nonnegative symmetric kernels, the linear interaction can also
be interpreted through additional interior jumps of the killed
stable process. We leave the development of this probabilistic
interpretation and its fractional-to-local limit to future work.

\section*{Funding}

The authors declare that no funds, grants, or other support were received

during the preparation of this manuscript.

  \bibliographystyle{plain}
\bibliography{bibliography.bib}
\end{document}